\DeclareMathOperator{\id}{id}
\DeclareMathOperator{\gen}{gen}
\DeclareMathOperator{\Prim}{Prim}
\newcommand{\mx}{\text{max}}
\newcommand{\setDefSep}[0]{\: |\:}
\newcommand{\e}{\varepsilon}
\newcommand{\KK}{{\mathbb{K}}}
\newcommand{\RR}{{\mathbb{R}}}
\newcommand{\NN}{{\mathbb{N}}}
\newcommand{\CC}{{\mathbb{C}}}
\newcommand{\Cs}{{$C^*$-al\-ge\-bra}}
\newcommand{\termDef}[1]{\textbf{#1}\index{#1}}
\newtheorem{lma}{Lemma}[section]
\newaliascnt{thmCt}{lma}
\newtheorem{thm}[thmCt]{Theorem}
\newaliascnt{corCt}{lma}
\newtheorem{cor}[corCt]{Corollary}
\newaliascnt{propCt}{lma}
\newtheorem{prop}[propCt]{Proposition}
\newaliascnt{defnCt}{lma}
\newtheorem*{thm*}{Theorem}
\newtheorem*{cor*}{Corollary}
\newtheorem*{prop*}{Proposition}
\theoremstyle{definition}
\newtheorem{pargr}[lma]{}
\newaliascnt{rmkCt}{lma}
\newtheorem{rmk}[rmkCt]{Remark}
\newaliascnt{questCt}{lma}
\newtheorem{quest}[questCt]{Question}
\newaliascnt{exmplCt}{lma}
\newtheorem{exmpl}[exmplCt]{Example}
\begin{document}

\title[The generator problem for $\mathcal{Z}$-stable $C^{*}$-algebras]{The generator problem for $\mathcal{Z}$-stable $C^*$-algebras}%

\author{Hannes Thiel}
\address{Department of Mathematical Sciences, University of Copenhagen, Universitetsparken 5, DK-2100, Copenhagen \O, Denmark}
\email{thiel@math.ku.dk}

\author{Wilhelm Winter}
\address{Mathematisches Institut der Universit\"at M\"unster, Einsteinstr. 62, 48149 M\"unster, Germany}
\email{wwinter@uni-muenster.de}

\thanks{
This research was partially supported by the Centre de Recerca Matem\`{a}tica, Barcelona.
The first named author was partially supported by the Danish National Research Foundation through the Centre for Symmetry and Deformation, Copenhagen.
The second named author was partially supported by EPSRC Grants  EP/G014019/1  and EP/I019227/1.
}

\subjclass[2000]%
{Primary
46L05, 
46L85; 
Secondary
46L35. 
}

\keywords{$C^*$-algebras, generator problem, single generation, $\mathcal{Z}$-stability}

\date{\today}

\begin{abstract}
    The generator problem was posed by Kadison in 1967, and it remains open until today.
    We provide a solution for the class of \Cs{s} absorbing the Jiang-Su algebra $\mathcal{Z}$ tensorially.
    More precisely, we show that every unital, separable, $\mathcal{Z}$-stable \Cs{} $A$ is singly generated, which means that there exists an element $x\in A$ that is not contained in any proper sub-\Cs{} of $A$.

    To give applications of our result, we observe that $\mathcal{Z}$ can be embedded into the reduced group \Cs{} of a discrete group that contains a non-cyclic, free subgroup.
    It follows that certain tensor products with reduced group \Cs{s} are singly generated.
    In particular, $C^*_r(F_\infty)\otimes C^*_r(F_\infty)$ is singly generated.
\end{abstract}

\maketitle

\section{Introduction}

    By an operator algebra we mean a $^{*}$-subalgebra of $B(H)$ that is either closed in the norm topology (a concrete \Cs{}) or the weak operator topology (a von Neumann algebra).
    One way of realizing an operator algebra is to take a subset of $B(H)$ and consider the smallest operator algebra containing it.

    In a trivial way, every operator algebra can be obtained this way.
    The situation becomes interesting if one imposes restrictions on the generating set, and one natural possibility is to require that it consists of only one element, i.e., to consider operator algebras that are generated by a single operator.
    It is an old problem to determine which operator algebras arise this way.

    More generally, one tries to compute the minimal number of elements that generate a given operator algebra, see \ref{pargr:Generators}.
    It is often convenient to consider self-adjoint generators.
    Note that two self-adjoint elements $a,b$ generate the same operator algebra as the element $a+ib$.
    Thus, if we ask whether an operator algebra is singly generated, it is equivalent to ask whether it is generated by two self-adjoint elements.

    In the case of von Neumann algebras, the generator problem was included in Kadison's famous `Problems on von Neumann algebras', \cite{Kad1967}.
    This problem list has turned out to be very influential, yet its original form remains unpublished.
    It is indirectly available in an article by Ge, \cite{Ge2003}, where a brief summary of the developments around Kadison's famous problems is given.

\begin{quest}[{Kadison, \cite[Problem 14]{Kad1967}, see also \cite{Ge2003}}]
\label{quest:Gen_Problem_vNalg}
        Is every
separably-acting\footnote{A von Neumann algebra is called `separably-acting', or just `separable', if it is a subalgebra of $B(l^2\NN)$, or equivalently if it has a separable predual.}
        von Neumann algebra singly generated?
\end{quest}

    As noted in \cite{She2009}, there exist singly generated von Neumann algebras that are not sepa\-rably-acting.
    However, the separably-acting von Neumann algebras are the natural class for which one might expect single generation.
    The answer to \autoref{quest:Gen_Problem_vNalg} is still open in general, but many authors have contributed to show that large classes of separably-acting von Neumann algebras are singly generated.

    We just mention an incomplete list of results.
    It starts with von Neumann, \cite{vNe1931}, who showed that the abelian operator algebras named after him are generated by a single self-adjoint element, thus implicitly raising the generator problem.
    Some thirty years later, this was extended by Pearcy, \cite{Pea1962}, who showed that all von Neumann algebras of type $\text{I}$ are singly generated.
    Then Wogen, \cite[Theorem 2]{Wog1969}, proved that all properly infinite von Neumann algebras are singly generated, thus reducing the generator problem to the type $\text{II}_1$ case.

    Later, this was further reduced to the case of a $\text{II}_1$-factor by Willig, \cite{Wil1974}, and then to the case of a finitely-generated $\text{II}_1$-factor by Sherman, \cite[Theorem 3.8]{She2009}.
    This means that \autoref{quest:Gen_Problem_vNalg} has a positive answer if every separably-acting, finitely generated $\text{II}_1$-factor is singly generated.

    There are many properties known to imply that a $\text{II}_1$-factor is singly generated.
    We just mention that Ge and Popa, \cite[Theorem 6.2]{GePop1998}, show that every tensorially
non-prime\footnote{A $\text{II}_1$-factor $M$ is called tensorially non-prime if it is isomorphic to a tensor product, $M_1\bar{\otimes}M_2$, of two $\text{II}_1$-factors $M_1, M_2$.}
    $\text{II}_1$-factor is singly generated.
    Our main result \autoref{thm:Tensor_gen2} can be considered as a partial $C^{*}$-algebraic analog of this result.

    Let us also mention that the free group factors $W^{*}(F_k)$ are the outstanding examples of separably-acting von Neumann algebra for which it is not known whether they are singly generated.
    \\

    In the case of \Cs{s}, the generator problem is more subtle.
    There is already no obvious class of \Cs{s} for which one conjectures that they are singly generated.
    Every singly generated \Cs{} is separable\footnote{A \Cs{} is called `separable' if it contains a countable, norm-dense subset}.
    However, the converse is false, and counterexamples can be found among the commutative \Cs{s}.

    In fact, the \Cs{} $C_{0}(X)$ is generated by $n$ self-adjoint elements if and only if $X$ can be embedded into $\RR^n$.
    Thus, $C_{0}(X)$ is singly generated if and only if $X$ is planar, i.e., can be embedded into the plane $\RR^2$.

    It is easy to see that a \Cs{} $A$ is generated by $n$ self-adjoint elements if and only if its minimal unitization $\widetilde{A}$ is generated by $n$ self-adjoint elements.
    Therefore, we will mostly consider the generator problem for separable, unital \Cs{}.
    In that case, taking the tensor product with a matrix algebra has the effect of reducing the necessary number of generators.
    If $A$ is generated by $n^2+1$ self-adjoint elements, then $A\otimes M_n$ is singly generated, see e.g. \cite[Theorem 3]{Nag2004}.

    One derives the principle that a \Cs{} needs less generators if it is `more non-commutative'.
    Consequently, one might expect a (separable) \Cs{} to be singly generated if it is `maximally non-commutative'.
    As a non-unital instance of this principle, we note that the stabilization, $A\otimes\KK$, of a separable unital \Cs{} $A$ is singly generated, \cite[Theorem 8]{OlsZam1976}.
    In the unital case, there are at least three natural cases when one considers a \Cs{} $A$ to be `maximally non-commutative', which are the following:
    \begin{enumerate}[\quad (1)  ]
        \item
        $A$ contains a simple, unital, nonelementary sub-\Cs{},
        \item
        $A$ contains a sequence of pairwise orthogonal, full elements,
        \item
        $A$ has no finite-dimensional irreducible representations.
    \end{enumerate}

    In general, the implications $(1)\Rightarrow(2)\Rightarrow(3)$ hold; it is not known if the converses are true.

    Conditions $(2)$ and $(3)$ can also be considered for possibly non-unital \Cs{s},
    and we let $(2^*)$ be the weaker statement that $A$ contains \emph{two} orthogonal, full elements.
    The implication `$(3)\Rightarrow(2)$' holds exactly if the implication `$(3)\Rightarrow(2^*)$' holds.

    The Global Glimm halving problem asks the following:
    Given a (possibly non-unital) \Cs{} $A$ that satisfies condition $(3)$, does there exist a full map from the cone over $M_2$ to $A$?
    It is not known whether the Global Glimm halving problem has a positive answer,
    but if it does then it shows that implication `$(3)\Rightarrow(2)$' holds, since the cone over $M_2$ contains two orthogonal, full elements.

    Let us remark that the analogs of conditions $(1)-(3)$ for von Neumann algebras are all equivalent.
    In fact, if a von Neumann algebra $M$ has no finite-dimensional representations, then the hyperfinite $\text{II}_1$-factor $\mathcal{R}$ unitally embeds into $M$.

    Historically, the generator problem for \Cs{s} is mostly asked for \Cs{s} that are simple ore more generally have no finite-dimensional representations:

\begin{quest}
\label{quest:Gen_Problem_Calg_simple}
        Is every simple, separable, unital \Cs{} singly generated?
\end{quest}

\begin{quest}
\label{quest:Gen_Problem_Calg_no_fd_repr}
        Is a separable, unital \Cs{} singly generated provided it has no finite-dimensional irreducible representations?
\end{quest}

    The answers to both questions are open.
    A positive answer to \autoref{quest:Gen_Problem_Calg_no_fd_repr} implies a positive answer to \autoref{quest:Gen_Problem_Calg_simple}, of course.
    The converse is not clear.

    Let us mention some results that solve the generator problem for particular classes of separable \Cs{s}.
    It was shown by Topping, \cite{Top1968}, that  every UHF-algebra is singly generated.
    This was generalized by Olsen and Zame, \cite[Theorem 9]{OlsZam1976}, who showed that the tensor product, $A\otimes B$, of any separable, unital \Cs{} $A$ with a UHF-algebra $B$ is singly generated.

    Later, it was shown by Li and Shen, \cite[Theorem 3.1]{LiShe2008}, that every unital, approximately
divisible\footnote{A unital \Cs{} $A$ is `approximately divisible' if for every $\e>0$ and finite subset $F\subset A$ there exists a finite-dimensional, unital sub-\Cs{} $B\subset A$ such that $B$ has no characters and $\|xb-bx\|\leq\e\|b\|$ for all $x\in F, b\in B$.}
    \Cs{} is singly generated.
    This generalizes the result of Olsen and Zame, since the tensor product with a UHF-algebra is always approximately divisible.

    In this article we  prove that every separable, unital, $\mathcal{Z}$-stable \Cs{} is singly generated, see \autoref{prop:Gen2_Z-stable}.
    This generalizes the result of Li and Shen, since every approximately divisible \Cs{} is $\mathcal{Z}$-stable, see \cite[Theorem 2.3]{TomWin2008}.
    The notion of $\mathcal{Z}$-stability has proven to be very important in the classification program of nuclear \Cs{s}, see e.g.\ \cite{Win2007arx} or \cite{EllTom2008}, and it is has been shown that many nuclear, simple \Cs{s} are $\mathcal{Z}$-stable, see e.g.\ \cite{Win2010}. $\mathcal{Z}$-stability is also relevant in the non-nuclear context; for example, unital $\mathcal{Z}$-stable \Cs{s} satisfy Kadison's similarity property, see \cite{JohWin2011}. \\

    This paper proceeds as follows:
    \\

    In \autoref{sect:prelim} we set up our notation and give some basic facts about the generator rank, see \ref{pargr:Generators}, and $C_{0}(X)$-algebras, see \ref{pargr:C_X_alg}.
    \\

    \autoref{sect:results} contains the proof of our main result, which states that the tensor product $A\otimes_\mx B$ of two separable, unital \Cs{s} is singly generated, if $A$ satisfies condition $(2)$ from above (e.g. $A$ is simple and non-elementary) and $B$ admits a unital embedding of the Jiang-Su algebra $\mathcal{Z}$, see \autoref{thm:Tensor_gen2}.

    We derive that every separable, unital, $\mathcal{Z}$-stable \Cs{} is singly generated, see \autoref{prop:Gen2_Z-stable}.
    Our main result can be considered as a (partial) $C^*$-algebraic analog of a theorem of Ge and Popa, \cite[Theorem 6.2]{GePop1998}, which shows that a tensor product, $M\bar{\otimes} N$, of two $\text{II}_1$-factors $M,N$ is singly generated.
    In fact, we can reprove their theorem with our methods, see \autoref{prop:GePope}.
    \\

    In \autoref{sect:appl} we give further applications of our main theorem to tensor products with reduced group \Cs{s}.
    We first observe that $\mathcal{Z}$ embeds unitally into $C^*_r(F_\infty)$, the reduced group \Cs{} of the free group on infinitely many generators, see \autoref{pargr:Z-embedding_free_gp_alg}.
    Consequently, if a discrete group $\Gamma$ contains a non-cyclic free subgroup, then $\mathcal{Z}$ embeds unitally into $C^*_r(\Gamma)$, see \autoref{prop:Z-embedding_gps}.

    We deduce that tensor products of the form $A\otimes_\mx C^*_r(\Gamma)$ are singly generated if $A$ is a separable, unital \Cs{} satisfying condition $(2)$ from above, and $\Gamma$ is a group containing a non-cyclic free subgroup, see \autoref{prop:Gen2_tensor_free_gp_alg}.
    For example, $C^*_r(F_\infty)\otimes C^*_r(F_\infty)$ is singly generated, although this \Cs{} is not $\mathcal{Z}$-stable, see \autoref{pargr:free_gp_tensor_free_gp}.

\section{Preliminaries}
\label{sect:prelim}

\noindent
    By a morphism between \Cs{s} we mean a $^{*}$-homomorphism, and by an ideal of a \Cs{} we understand a closed, two-sided ideal.
    If $A$ is a \Cs{}, then we denote by $\widetilde{A}$ its minimal unitization.
    Often, we write $M_k$ for the \Cs{} of $k$-by-$k$ matrices $M_k(\CC)$.

\begin{pargr}
\label{pargr:Generators}
    Let $A$ be a \Cs{}, and $A_\text{sa}\subset A$ the subset of self-adjoint elements.
    We say that a set $S\subset A_\text{sa}$ generates $A$, denoted $A=C^\ast(S)$, if the smallest sub-\Cs{} of $A$ containing $S$ is $A$ itself.
    We denote by $\gen(A)$  the smallest number $n\in\{1,2,3,\ldots,\infty\}$ such that $A$ contains a generating subset $S\subset A_\text{sa}$ of cardinality $n$, and we call $\gen(A)$ the \termDef{generating rank} of $A$.

    We stress that for the definition of $\gen(A)$, the generators are assumed to be self-adjoint.
    Two self-adjoint elements $a,b$ generate the same \Cs{} as the (non-self-adjoint) element $a+ib$.
    Therefore, a \Cs{} $A$ is said to be singly generated if $\gen(A)\leq 2$.

    For more details on the generating rank we refer the reader to Nagisa, \cite{Nag2004}, where also the following simple facts are noted for \Cs{s} $A$ and $B$:
    \begin{enumerate}[(1)   ]
        \item
        $\gen(\widetilde{A})=\gen(A)$,
        \item
        $\gen(C^*(A,B))\leq\gen(A)+\gen(B)$, if $A,B$ are sub-\Cs{s} of a common \Cs{}, and where $C^*(A,B)$ denotes the sub-\Cs{} they generate together,
        \item
        $\gen(A\oplus B)=\max\{\gen(A),\gen(B)\}$ if at least one of the algebras is unital.
    \end{enumerate}

    Let $I\lhd A$ be an ideal in a \Cs{} $A$.
    It is easy to see that the generating rank of the quotient $A/I$ is not bigger than the generating rank of $A$, i.e., $\gen(A/I)\leq\gen(A)$, and the generating rank of $A$ can be estimated as $\gen(A)\leq\gen(I)+\gen(A/I)$.
    The following result gives an estimate for $\gen(I)$, and it is probably well-known to  experts;
    since we could not locate it in the literature,  we include a short proof.
\end{pargr}

\begin{prop}
\label{prop:Gen_ideal}
    Let $A$ be a \Cs{}, and $I\lhd A$ an ideal.
    Then $\gen(I)\leq\gen(A)+1$.
\end{prop}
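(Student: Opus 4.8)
The plan is to manufacture, out of $n := \gen(A)$ self-adjoint generators of $A$, a set of $n+1$ self-adjoint generators of $I$. We may assume $n<\infty$, so that $A = C^{*}(a_1,\dots,a_n)$ for self-adjoint $a_1,\dots,a_n$; in particular $A$, and hence $I$, is separable, so $I$ admits a strictly positive element $h\in I_+$. Since $h$ and each $ha_ih$ are self-adjoint and lie in the ideal $I$, the strategy is to show that the sub-\Cs{} $B := C^{*}(h,\,ha_1h,\dots,ha_nh)$ is all of $I$; this yields $\gen(I)\le n+1 = \gen(A)+1$. The inclusion $B\subseteq I$ is immediate, so everything reduces to proving $I\subseteq B$.

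For the reverse inclusion I would first reduce to a statement about $\overline{hAh}$. Put $e_\e := h(h+\e)^{-1}\in I$, the resolvent being taken in $\widetilde{I}$; this is the approximate unit of $I$ associated to $h$, and since $t\mapsto t/(t+\e)$ vanishes at $0$, functional calculus gives $e_\e\in B$. Because $e_\e b e_\e\to b$ for every $b\in I$ and $e_\e b e_\e = h\,[(h+\e)^{-1}b(h+\e)^{-1}]\,h\in hIh$, the element $h$ generates $I$ as a hereditary subalgebra, i.e.\ $\overline{hIh}=I$. As $hIh\subseteq hAh\subseteq I$, it therefore suffices to prove $hAh\subseteq B$. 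Since the monomials in $a_1,\dots,a_n$ span a dense subalgebra of $A$ and $B$ is closed, this in turn reduces to showing $hwh\in B$ for every monomial $w$ in the $a_i$, which I would establish by induction on the length of $w$; the length-one case holds by construction.

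The inductive step is the crux, and I expect it to be the only genuine obstacle: given $x,y\in A$ with $hxh,\,hyh\in B$, one must show $hxyh\in B$, the difficulty being that formally this asks to invert the ``missing'' factor of $h$ between $x$ and $y$ while remaining inside the non-unital algebra $B$. The remedy is to let $e_\e$ play the role of that factor, via the identity
\[
(hxh)\,(h+\e)^{-2}\,(hyh) \;=\; h\,x\,e_\e^{2}\,y\,h .
\]
The left-hand side lies in $B$: writing $(h+\e)^{-2}=\e^{-2}(1-e_\e)^2$ and expanding $(1-e_\e)^2 = 1 - 2e_\e + e_\e^2$, the unit is absorbed upon multiplication by $hxh$ and $hyh$, leaving a linear combination of products of elements of $B$. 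Meanwhile the right-hand side converges to $hxyh$ as $\e\to 0$, since $yh\in I$ forces $e_\e^2(yh)\to yh$ and hence $\|h x(1-e_\e^2)yh\|\le \|hx\|\,\|(1-e_\e^2)yh\|\to 0$. As $B$ is closed, $hxyh\in B$, which completes the induction, gives $hAh\subseteq B$, and therefore $B=I$.
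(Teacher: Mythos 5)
Your proof is correct, and it uses exactly the generating set of the paper: a strictly positive $h\in I$ together with $h a_1 h,\dots,h a_n h$, where $a_1,\dots,a_n$ are self-adjoint generators of $A$. The difference is in how one verifies that these $n+1$ elements generate $I$, a step the paper declares ``straightforward'' after invoking the nontrivial fact that $C^{*}(h)$ contains an approximate unit of $I$ that is quasi-central in $A$ (Akemann--Pedersen, Arveson); quasi-centrality is what lets one slide approximate units past the $a_i$ when reassembling words. Your induction on word length shows that this tool is not needed: in the identity $(hxh)(h+\varepsilon)^{-2}(hyh)=h\,x\,e_\varepsilon^{2}\,y\,h$ with $e_\varepsilon=h(h+\varepsilon)^{-1}$, the correcting factor is sandwiched directly between two copies of $h$, so everything takes place inside the commutative algebra generated by $h$ and no commutation with elements of $A$ ever occurs; the expansion $(h+\varepsilon)^{-2}=\varepsilon^{-2}(1-2e_\varepsilon+e_\varepsilon^{2})$ keeps the left-hand side in $B$, and strict positivity of $h$ (so that $e_\varepsilon$ is an approximate unit for $I$ and $\overline{hIh}=I$) gives the convergence $h\,x\,e_\varepsilon^{2}\,y\,h\to hxyh$. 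The paper's route buys brevity by outsourcing the work to the quasi-centrality literature; yours buys a complete, self-contained and more elementary argument, at the modest cost of the density-and-induction bookkeeping, all of which you carry out correctly.
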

\begin{proof}
    We may assume $\gen(A)$ is finite.
    So let $a_1,\ldots,a_k$ be a set of self-adjoint generators for $A$.
    Then $A$ and $I$ are separable, and so $I$ contains a strictly positive element $h$.
    It follows that $C^*(h)$ contains a quasi-central approximate unit, see \cite[Corollary 3.3]{AkePed1977} and \cite{Arv1977}.
    It is straightforward to show that $I$ is generated by the $k+1$ elements $h,ha_1h,\ldots,ha_kh$.
\end{proof}

\noindent
    The following result is attributed to Kirchberg in \cite{Nag2004}.

\begin{thm}[Kirchberg]
\label{prop:Gen2_prop_inf}
    Every separable, unital, properly infinite \Cs{} is singly generated.
\end{thm}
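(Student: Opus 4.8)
The plan is to construct a single element generating $A$, using separability to fix a countable dense set and proper infiniteness to supply isometries that let one ``sample'' that set. First I would record the standard reductions. Since two self-adjoint elements $a,b$ generate the same \Cs{} as the single element $a+ib$, it suffices to produce two self-adjoint generators, i.e.\ to show $\gen(A)\le 2$, equivalently to find one $x\in A$ with $C^*(x)=A$. By separability fix a sequence $(x_n)_{n\ge 1}$ that is dense in the closed unit ball of $A$, so that $A=C^*(\{x_n\})$. Proper infiniteness provides isometries $s_1,s_2\in A$ with $s_i^*s_j=\delta_{ij}$; setting $v_n:=s_1^{\,n-1}s_2$ yields a sequence of isometries with pairwise orthogonal ranges, so that \[ v_m^*v_n=\delta_{mn}\,1 \] and the range projections $p_n:=v_nv_n^*$ are pairwise orthogonal.

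The key device is a single sampling element. I would choose positive scalars $\beta_n$ with $\sum_n\beta_n^2<\infty$ and set \[ R:=\sum_{n\ge 1}\beta_n\,v_nx_n . \] Orthogonality of the ranges gives, for every $\xi$, the estimate $\bigl\|\sum_{n\ge N}\beta_nv_nx_n\,\xi\bigr\|^2=\sum_{n\ge N}\beta_n^2\|v_nx_n\xi\|^2\le\|\xi\|^2\sum_{n\ge N}\beta_n^2$, so the series converges in norm and $R\in A$ is well defined. The point of this choice is the identity \[ v_m^*R=\sum_{n\ge 1}\beta_n\,(v_m^*v_n)\,x_n=\beta_m\,x_m, \] which shows that every $x_m$ lies in $C^*(s_1,s_2,R)$; as the $x_m$ generate $A$, this already gives $A=C^*(s_1,s_2,R)$. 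Thus $A$ admits a three-element generating set in which two generators are isometries, and the remaining task is to compress these three elements into a single generator inside the properly infinite algebra $A$.

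For this final step I would invoke proper infiniteness once more to obtain orthogonal isometries serving as ``addresses''. The idea is to assemble $x=z+W$, where $z$ is a fixed isometry with range projection $r=zz^*$ and $W$ carries $s_1,s_2,R$ in corners together with scalar markers, arranged so that $W$ has range orthogonal to $r$. Then $z^*W=0$ forces \[ x^*x=1+W^*W, \] so $x^*x$ is invertible and $1\in C^*(x)$; moreover, by separating the markers with distinct scalars one arranges that the relevant corner projections occur as \emph{isolated} eigenvalues of $x^*x$ and are therefore recovered by continuous functional calculus. Compressing $x$ by these projections then returns the isometry $z=rx$ and the stored copies of $s_1,s_2,R$, and one uses $z$ (an honest isometry, $z^*z=1$) to untwist the stored data back to $s_1,s_2,R$ themselves, whence $C^*(x)=C^*(s_1,s_2,R)=A$.

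The main obstacle is precisely this last combination step: landing, inside the \emph{singly} generated algebra, both the unit and genuine full-source isometries, and then untwisting the corner data to recover $s_1,s_2,R$ as elements of $A$ rather than of a proper corner. This is exactly where proper infiniteness is indispensable: with only orthogonal projections at one's disposal one reconstructs a proper corner of $A$ together with a copy of the compacts, but never the unit, since a full-source isometry can never be obtained by one-sided compression and must instead enter $C^*(x)$ as an additive summand. The relation $x^*x=1+(\text{positive})$ together with the isolated-eigenvalue argument is the mechanism that breaks this impasse; verifying the norm-convergence of $R$ and engineering the spectral gaps that keep the various corners separated is the delicate bookkeeping that the construction must control.
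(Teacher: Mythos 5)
Your Step 1 is correct and is a nice Wogen-type reduction: the sampling element $R=\sum_n\beta_n v_nx_n$ with $v_m^*R=\beta_m x_m$ does give $A=C^*(s_1,s_2,R)$ (the paper skips this and instead encodes an arbitrary countable generating family directly). The genuine gap is in your final combination step, which is asserted rather than carried out, and whose mechanism fails as described. Take the natural reading of your design: isometries $z,t_1,t_2,t_3$ with pairwise orthogonal ranges, $W=\sum_{i=1}^3 t_i(\alpha_i+d_i)t_i^*$ with $d_i\in\{s_1,s_2,R\}$ and scalar markers $\alpha_i$, and $x=z+W$. Then indeed $x^*x=1+\sum_i t_i\,|\alpha_i+d_i|^2\,t_i^*$, and for spread-out $\alpha_i$ functional calculus recovers $1$, the source projections $p_i=t_it_i^*$, hence also $p_ixp_i=t_i(\alpha_i+d_i)t_i^*$ and $z=(1-\sum_ip_i)x$. (Note in passing that your other claim, recovering $r=zz^*$ spectrally, is not available: $x^*x$ sees only sources, while $xx^*$ contains the cross terms $zW^*+Wz^*$, and $Wz^*=0$ would force $W=0$ since $z^*$ is a co-isometry.) But at this point you hold only the \emph{conjugated} copies $t_id_it_i^*$ together with the single isometry $z$, and $z$ cannot ``untwist'' them: since $t_i^*z=0$ and $z^*t_i=0$, every reduced word in $x$ and $x^*$ has the form $z^a(z^*)^c$ or $z^a\,t_ibt_i^*\,(z^*)^c$ with $b\in C^*(1,d_i)$, so $C^*(x)$ lies in the closed span of such elements and in general does not contain $d_i$ itself. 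The stored data never leaves its address; a full-source isometry at a \emph{different} address is of no help for conjugating it back.

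What is missing is a supply of partial isometries \emph{connecting} the addresses, and this is exactly what the paper (following Olsen--Zame) builds in: there each stored generator comes with its own additive isometry at the same address, $x=\sum_{k\ge1}\bigl(s_ka_ks_k^*+2^{-k}s_k\bigr)$ with $\sigma(a_k)\subset[\tfrac12 4^{-k},4^{-k}]$ in disjoint windows, so that after the spectral isolation (there done with polynomials in the non-normal $x$, yielding $p=s_1s_1^*$ via $f(yy^*)$) one has both $s_1a_1s_1^*$ and $s_1$ in $C^*(x)$, whence $a_1=s_1^*(s_1a_1s_1^*)s_1$. Your scheme can be repaired the same way with one more idea: add shift terms $\sum_{i=0}^{2}\beta_i\,t_{i+1}t_i^*$ to $W$ (with $t_0:=z$). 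Then $x^*x-1$ is still block-diagonal over the $p_i$ with controllable windows, the shifts are recovered as $p_{i+1}xp_i=\beta_i t_{i+1}t_i^*$, one gets $t_{i+1}=\beta_i^{-1}(p_{i+1}xp_i)t_i$ inductively from $t_0=z\in C^*(x)$, and finally $d_i=t_i^*(p_ixp_i)t_i-\alpha_i\in C^*(x)$, closing the argument. As written, however, the proposal stops one essential device short of a proof.
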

\begin{proof}
    We sketch a proof based on the proof of \cite[Theorem 9]{OlsZam1976}.
    Let $A$ be a separable, unital, properly infinite \Cs{}.
    Then there exist isometries $s_1,s_2,\ldots\in A$ with pairwise orthogonal ranges (i.e., $A$ contains a unital copy of the Cuntz algebra $\mathcal{O}_\infty$).

    Let $a_1,a_2,\ldots\in A$ be a sequence of (positive) generators for $A$ such that their spectra satisfy $\sigma(a_k)\subset[1/2\cdot 1/4^k,1/4^k]$.
    A generator for $A$ is given by:
    \begin{align*}
        x &:=\sum_{k\geq 1}(s_ka_ks_k^\ast + 1/2^ks_k).
    \end{align*}

    As in in the proof of \cite[Theorem 9]{OlsZam1976}, one can show that $\sigma(x)\subset\{0\}\cup\bigcup_{k\geq 1}[1/2\cdot 1/4^k,1/4^k]$.
    Let $B:=C^*(x)\subset A$.
    Proceeding inductively, one shows that $a_k,s_k\in B$.
    We only sketch this for $k=1$.
    Set $p:=s_1s_1^*$.
    Let $f_n$ be a sequence of polynomials converging uniformly to $1$ on $[1/8,1/4]$ and to $0$ on $[0,1/16]$.
    Then $f_n(x)$ converges to an element $y\in B$ of the form $y=p+pb(1-p)$ for some $b\in A$.
    We compute $yy^*=p(1_A+b(1-p)b^*)p$.
    Then for a continuous function $f\colon\RR\to\RR$ with $f(0)=0$ and $f(t)=1$ for $t\geq 1$, we get $f(yy^*)=p\in B$.
    Then $s_1a_1s_1^*=pxp\in B$ and $s_1=2\cdot px(1-p)\in B$, and then also $a_1\in B$.
\end{proof}

\begin{pargr}
\label{pargr:C_X_alg}
    Let $X$ be a locally compact $\sigma$-compact Hausdorff space.
    A $C_{0}(X)$-algebra is a \Cs{} $A$ together with a morphism $\eta\colon C_0(X)\to Z(M(A))$, from the commutative \Cs{} $C_0(X)$ to the center of the multiplier algebra of $A$, such that for any approximate unit $(u_{\lambda})_{\Lambda}$ of $C_0(X)$, $\eta(u_{\lambda}) a \to a $ for any  $a \in A$, or equivalently, the closure of $\eta(C_0(X)) A$ is all of $A$.
    Thus, if $X$ is compact, then $\eta$ is necessarily unital.
    We will usually suppress reference to the structure map, and simply write $fa$ or $f\cdot a$ instead of $\eta(f)a$ for the product of a function $f\in C_0(X)$ and an element $a\in A$.

    Let $Y\subset X$ be a closed subset, and $U:=X\setminus Y$ its complement (an open subset).
    Then $C_0(U)\cdot A$ is an ideal of $A$, denoted by $A(U)$.
    The quotient $A/A(U)$ is denoted by $A(Y)$.

    Given a point $x\in X$, we write $A(x)$ for $A(\{x\})$, and we call this \Cs{} the fiber of $A$ at $x$.
    For an element $a\in A$, we denote by $a(x)$ the image of $a$ in the fiber $A(x)$.
    For each $a\in A$, we may consider the map $\check{a}\colon x\mapsto\|a(x)\|$.
    This is a real-valued, upper-semicontinuous function on $X$, vanishing at infinity.
    The $C_{0}(X)$-algebra $A$ is called continuous if $\check{a}$ is a continuous function for each $a\in A$.

    For more information on $C_{0}(X)$-algebras we refer the reader to \cite[\S 1]{Kas1988} or the more recent \cite[\S 2]{Dad2009}.
\end{pargr}

\begin{pargr}
\label{pargr:Z}
    The Jiang-Su algebra $\mathcal{Z}$ was constructed in \cite{JiaSu1999}; it may be regarded as a $C^{*}$-algebraic analog of the hyperfinite $\text{II}_1$-factor.
    It can be obtained as an inductive limit of prime dimension drop algebras
    $\mathcal{Z}_{p,q}:=\{f\colon[0,1]\to M_p\otimes M_q\ |\ f(0)\in 1_p\otimes M_q, f(1)\in M_p\otimes 1_q\}$.

    For more details, we refer the reader to \cite{Win2011}, where  $\mathcal{Z}$ is characterized in an entirely abstract manner, and to \cite{Ror2004} and \cite{RorWin2010}, where it is  shown that the generalized dimension drop algebra $\mathcal{Z}_{2^\infty,3^\infty}:=\{f\colon[0,1]\to M_{2^\infty}\otimes M_{3^\infty}\ |\ f(0)\in 1\otimes M_{3^\infty}, f(1)\in M_{2^\infty}\otimes 1\}$ embeds unitally into $\mathcal{Z}$; in fact, $\mathcal{Z}$ can be written as a stationary inductive limit of $\mathcal{Z}_{2^\infty,3^\infty}$.
\end{pargr}

\section{Results}
\label{sect:results}

\begin{lma}
\label{lma:Getting_5}
    Let $A$ be a separable, unital \Cs{}.
    Then $\gen(A\otimes \mathcal{Z}_{2^\infty,3^\infty})\leq 5$.
\end{lma}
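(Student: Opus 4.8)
The plan is to present $B:=A\otimes\mathcal{Z}_{2^\infty,3^\infty}$ as a continuous $C([0,1])$-algebra and to read the bound off its fibres. Writing $E:=A\otimes M_{2^\infty}\otimes M_{3^\infty}$, the dimension-drop description realizes $B$ inside $C([0,1],E)$ as the \Cs{} of paths $f$ with $f(0)\in A\otimes 1\otimes M_{3^\infty}$ and $f(1)\in A\otimes M_{2^\infty}\otimes 1$. This is a $C([0,1])$-algebra whose fibre over each interior point is $E=A\otimes M_{6^\infty}$ and whose boundary fibres are $A\otimes M_{3^\infty}$ (at $0$) and $A\otimes M_{2^\infty}$ (at $1$). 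First I would record that all three fibres are singly generated: each is the tensor product of a separable, unital \Cs{} with a UHF-algebra, so its generating rank is at most $2$ by \cite[Theorem 9]{OlsZam1976}.

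Next I would split off both boundary fibres \emph{at once}. Let $I:=B((0,1))$ be the ideal of paths vanishing at $0$ and $1$. Since the boundary conditions are vacuous for such paths, $I\cong C_0((0,1),E)$, and the quotient is $B/I\cong (A\otimes M_{3^\infty})\oplus(A\otimes M_{2^\infty})$. By the direct-sum rule of \ref{pargr:Generators} together with the previous paragraph, $\gen(B/I)=\max\{2,2\}\leq 2$. Feeding this into the estimate $\gen(B)\leq\gen(I)+\gen(B/I)$ from \ref{pargr:Generators} reduces the whole problem to showing $\gen(I)=\gen\bigl(C_0((0,1),E)\bigr)\leq 3$; as $\gen(E)\leq 2$, this is exactly the suspension estimate $\gen\bigl(C_0((0,1),E)\bigr)\leq\gen(E)+1$. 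It is essential here to remove the two endpoints simultaneously (so that the quotient contributes only $2$, not $4$), and to use the \emph{sharp} suspension bound: the easy estimate $\gen(E)+2$ would only give $6$.

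To attack the suspension estimate I would construct $\gen(E)+1$ self-adjoint generators by hand. Fix self-adjoint generators $a_1,\dots,a_n$ of $E$ with $n=\gen(E)$, and choose real functions $\phi,\psi\in C_0((0,1))$ with $\phi>0$ on $(0,1)$ whose joint map $t\mapsto(\phi(t),\psi(t))$ embeds $(0,1)$ into $\RR^2$, so that $C^*(\phi,\psi)=C_0((0,1))$. The target is to arrange $n+1$ self-adjoint elements whose generated \Cs{} contains, on the one hand, the full ideal $C_0((0,1))\otimes 1_E$ (this is where unitality of $E$ enters) and, on the other hand, each $\phi\otimes a_j$. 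Granting this, strict positivity of $\phi$ gives $\overline{C_0((0,1))\phi}=C_0((0,1))$, so one recovers $C_0((0,1))\otimes a_j$ for every $j$, and then $C_0((0,1))\otimes w$ for every word $w$ in the $a_j$; taking closed spans yields all of $C_0((0,1),E)$.

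The main obstacle is precisely producing $C_0((0,1))\otimes 1_E$ while spending only \emph{one} extra generator. For the cone $C_0((0,1],E)$ the analogous bound is immediate, since the half-open interval carries an \emph{injective} $h\in C_0((0,1])$, so $h\otimes 1_E$ already generates $C_0((0,1])\otimes 1_E$ and $h\otimes a_1,\dots,h\otimes a_n$ then finish the job. For the doubly-open suspension no single function separates $(0,1)$: a planar embedding genuinely needs the two coordinates $\phi,\psi$, and the naive assignment (attaching $\psi$ to $1_E$ and $\phi$ to the $a_j$) fails because the power of $\phi$ multiplying a word gets coupled to the word length, so one does not recover $\phi\otimes 1_E$. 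The technical heart is therefore to decouple the interval coordinates from the fibre elements fibrewise—exploiting that the two preimages of a generic value of $\phi$ can be separated inside $E\oplus E$ by a careful choice of the functions attached to the $a_j$—so that only $n+1$ self-adjoint elements are used. Granting $\gen\bigl(C_0((0,1),E)\bigr)\leq\gen(E)+1$, one concludes $\gen(I)\leq 3$ and hence $\gen(A\otimes\mathcal{Z}_{2^\infty,3^\infty})\leq\gen(I)+\gen(B/I)\leq 3+2=5$.
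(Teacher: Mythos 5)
Your reduction coincides with the paper's: the same ideal $I\cong A\otimes C_0(0,1)\otimes M_{6^\infty}$ with quotient $(A\otimes M_{2^\infty})\oplus(A\otimes M_{3^\infty})$, the bound $\gen(B/I)\leq 2$ via \cite[Theorem 9]{OlsZam1976} and the direct-sum rule of \ref{pargr:Generators}, and everything then hinges on $\gen(I)\leq 3$. But at exactly that point there is a genuine gap: you reduce to the ``sharp suspension estimate'' $\gen\bigl(C_0((0,1),E)\bigr)\leq\gen(E)+1$, correctly observe that the naive constructions fail on the doubly open interval (no single function in $C_0((0,1))$ is injective, and attaching the planar pair $\phi,\psi$ to the generators couples powers of $\phi$ to word length), sketch the needed ``fibrewise decoupling'' in one vague sentence, and then explicitly grant the estimate. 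That estimate is the entire content of the step; nothing in your text proves it, it is not among the elementary facts of \ref{pargr:Generators}, and for a general unital $E$ it is by no means clear. The easy route you mention (realizing the suspension as an ideal in the cone and applying \autoref{prop:Gen_ideal}) only yields $\gen(E)+2$, hence $\gen(I)\leq 4$ and a total bound of $6$, as you note yourself. So as written, the one claim you leave unproved is the only nontrivial one.

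The paper closes this step without any general suspension estimate, by exploiting once more the UHF tensor factor already present in the fibre: since $E=A\otimes M_{6^\infty}$, the ideal $I\cong A\otimes C_0(0,1)\otimes M_{6^\infty}$ sits as an ideal in the \emph{unital} \Cs{} $C:=(A\otimes C(S^1))\otimes M_{6^\infty}$ (complete the open interval to a circle), and $C$ is singly generated by Olsen--Zame because it is the tensor product of a separable, unital \Cs{} with a UHF algebra; then \autoref{prop:Gen_ideal} gives $\gen(I)\leq\gen(C)+1\leq 3$. In other words, the missing step can be repaired with tools you already cite -- but the repair is a different mechanism (ideal in a singly generated circle algebra, not a fibrewise generator construction), and your proposal does not contain it.
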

\begin{proof}
    Consider the ideal $I:=C_0(0,1)\otimes M_{6^\infty}$ in $B:=A\otimes \mathcal{Z}_{2^\infty,3^\infty}$.
    The quotient $B/I$ is isomorphic to $(A\otimes M_{2^\infty})\oplus (A\otimes M_{3^\infty})$.
    Thus, we have a short exact sequence:
    \begin{center}
        \makebox{
        \xymatrix{
            A\otimes C_0(0,1)\otimes M_{6^\infty} \ar[r]
            & A\otimes \mathcal{Z}_{2^\infty,3^\infty} \ar[r]
            & (A\otimes M_{2^\infty})\oplus (A\otimes M_{3^\infty})
        }}
    \end{center}

    It follows from \cite{OlsZam1976} that the tensor product of a unital, separable \Cs{} with a UHF-algebra is singly generated.
    In particular, $\gen(A\otimes M_{2^\infty}),\gen(A\otimes M_{3^\infty})\leq 2$.
    Thus, the quotient satisfies $\gen(B/I)=\max\{\gen(A\otimes M_{2^\infty}),\gen(A\otimes M_{3^\infty})\}\leq 2$, see \ref{pargr:Generators}.

    Note that $I$ is an ideal in the \Cs{} $C:=A\otimes C(S^1)\otimes M_{2^\infty}$.
    We have $\gen(C)\leq 2$, and then $\gen(I)\leq\gen(C)+1\leq 3$, by \autoref{prop:Gen_ideal}.
    Then, the extension is generated by at most $2+3=5$ self-adjoint elements.
\end{proof}

\noindent
    The following is a Stone-Weierstrass type result.
    We prove it using the factorial Stone-Weierstrass conjecture, which states that a sub-\Cs{} $B\subset A$ exhausts $A$ if it separates the factorial states of $A$.
    The factorial Stone-Weierstrass conjecture was proved for separable \Cs{s} independently by Longo, \cite{Lon1984}, and Popa, \cite{Pop1984}.

    See \ref{pargr:C_X_alg} for a short introduction to $C_{0}(X)$-algebras.

\begin{lma}
\label{lma:fibered_subalg}
    Let $A$ be a separable, continuous $C_{0}(X)$-algebra, and $B\subset A$ a sub-\Cs{} such that the following two conditions are satisfied:
    \begin{enumerate}[(i)]
        \item
        For each $x\in X$, $B$ exhausts the fiber $A(x)$,
        \item
        $B$ separates the points of $X$ by full elements, i.e., for each distinct pair of points $x_0,x_1\in X$ there exists some $b\in B$ such that $b(x_1)$ is full in $B(x_1)=A(x_1)$ and $b(x_0)=0$.
    \end{enumerate}
    Then $A=B$.

    Condition (ii) is for instance satisfied if $B$ contains the image of the structure map $\eta\colon C_0(X)\to Z(M(A))$.
\end{lma}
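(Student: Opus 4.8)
The plan is to derive $A=B$ from the factorial Stone--Weierstrass theorem: since $A$ is separable, by the result of Longo and Popa it suffices to check that $B$ separates the factorial states of $A$ and that no factorial state vanishes identically on $B$. The heart of the argument will be to show that every factorial state of $A$ is localized at a single point of $X$, after which conditions (i) and (ii) handle the two possible configurations of a pair of states.

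First I would analyze an arbitrary factorial state $\phi$ of $A$ via its GNS representation $(\pi_\phi,H_\phi,\xi_\phi)$, extended to the nondegenerate representation $\bar\pi_\phi$ of $M(A)$. Since $\eta(C_0(X))$ lies in $Z(M(A))$ it commutes with $\pi_\phi(A)$, and as $\bar\pi_\phi(M(A))\subseteq\pi_\phi(A)''$ we obtain $\bar\pi_\phi(\eta(C_0(X)))\subseteq\pi_\phi(A)'\cap\pi_\phi(A)''=Z(\pi_\phi(A)'')$. Factoriality of $\phi$ makes this center trivial, so $f\mapsto\bar\pi_\phi(\eta(f))$ is a character of $C_0(X)$; it is nonzero because $\pi_\phi$ is nondegenerate, hence equals evaluation at some point $x_\phi\in X$. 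It follows that $\phi(f\cdot a)=f(x_\phi)\,\phi(a)$ for all $f\in C_0(X)$ and $a\in A$, so $\phi$ annihilates the ideal $A(X\setminus\{x_\phi\})$ and descends to a state $\bar\phi$ on the fiber $A(x_\phi)$.

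With factorial states localized, I would separate a distinct pair $\phi,\psi$. If $x_\phi=x_\psi=:x$, then $\bar\phi\neq\bar\psi$ on $A(x)$, and since $B$ exhausts the fiber by (i) there is $b\in B$ with $\phi(b)=\bar\phi(b(x))\neq\bar\psi(b(x))=\psi(b)$. If $x_\phi\neq x_\psi$, I set $x_0:=x_\phi$ and $x_1:=x_\psi$ and use (ii) to pick $b\in B$ with $b(x_0)=0$ and $b(x_1)$ full in $A(x_1)$. As $b(x_1)$ is full and $\bar\psi$ is a nonzero state, I can find $a_i,a_i'\in A(x_1)$ with $\bar\psi\big(\sum_i a_i\,b(x_1)\,a_i'\big)\neq 0$, and by (i) lift these to $\beta_i,\beta_i'\in B$. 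Then $c:=\sum_i\beta_i\,b\,\beta_i'\in B$ satisfies $\psi(c)\neq 0$ while $\phi(c)=0$, because $b(x_0)=0$ forces $c(x_0)=0$. The same localization also shows that no factorial state kills all of $B$, since $\bar\phi$ is a nonzero state on $A(x_\phi)$, which $B$ exhausts.

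I expect the localization step to be the main obstacle, as it is where the $C_0(X)$-structure and factoriality are genuinely combined; once factorial states are pinned to single fibers, conditions (i) and (ii) dispatch the two cases almost formally, the only mild subtlety being the construction of the separating element $c$ from a full element in the distinct-fibers case. Finally, the sufficient condition in the last sentence is straightforward: if $\eta(C_0(X))\subseteq B$, then for distinct $x_0,x_1$ one chooses $f\in C_0(X)$ with $f(x_0)=0$ and $f(x_1)=1$, and $\eta(f)\in B$ vanishes at $x_0$ while acting as a unit, hence fully, in the fiber at $x_1$, which verifies (ii).
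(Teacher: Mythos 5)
Your proof is correct and follows essentially the same route as the paper's: you localize each factorial state to a single fiber using the central image of $\eta$ and the triviality of the center of $\pi_\phi(A)''$, then separate states via condition (i) in the same-fiber case and via condition (ii) combined with fullness and lifts from (i) in the distinct-fiber case, before invoking the Longo--Popa factorial Stone--Weierstrass theorem. The only cosmetic differences are that you argue directly with GNS representations of factorial states where the paper localizes arbitrary non-degenerate factor representations via $\Prim(Z(M(A)))$, and that you explicitly record the non-vanishing check (no factorial state kills $B$) and the verification of the final remark about $\eta(C_0(X))\subseteq B$, both of which the paper leaves implicit.
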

\begin{proof}
    Set $Y:=\Prim(Z(M(A)))$, and identify $Z(M(A))$ with $C(Y)$.
    Let $\pi\colon A\to B(H)$ be a non-degenerate factor representation.
    Then $\pi$ extends to a representation $\tilde{\pi}\colon M(A)\to B(H)$.
    It is straightforward to show $\pi(A)''=\tilde{\pi}(M(A))''$, so that $\tilde{\pi}$ is a factor representation of $M(A)$.
    For any $c\in Z(M(A))$, we have $c\in \pi(A)'\cap \tilde{\pi}(M(A))'' = \CC\cdot 1_H$.
    Thus, there exists a point $y\in Y$ such that $\tilde{\pi}(c)=c(y)\cdot 1_H$ for all $c\in Z(M(A))$.
    Since $\eta(C_0(X))$ contains an approximate unit for $A$, we have that $\tilde{\pi}\circ\eta$ is non-zero.
    Thus, there exists a point $x\in X$ such that $\tilde{\pi}\circ\eta(f)=f(x)\cdot 1_H$ for all $f\in C_0(X)$.
    This means that $\tilde{\pi}\circ\eta$ vanishes on the ideal $A(X\setminus\{x\})$, so that $\pi$ factors through the fiber $A(x)$.

    Let us show that $B\subset A$ separates the factors states of $A$.
    So let $\varphi_1,\varphi_2$ be two different, non-degenerate factors states of $A$.
    We have shown above that there are two points $x_1,x_2\in X$ such that $\varphi_i$ factors through $A(x_i)$, and we denote by $\bar{\varphi}_i\colon A(x_i)\to\CC$ the induced factor state on $A(x_i)$, for $i=1,2$.
    We distinguish two cases:

    Case 1: $x_1=x_2$.
    In this case, since $\varphi_1\neq\varphi_2$, there exists an element $a\in A$ such that $\varphi_1(a)\neq\varphi_2(a)$.
    By condition (i), there exists some element $b\in B$ such that $b(x_1)=a(x_1)$.
    Note that $\varphi_i(b)=\bar{\varphi}_i(b(x_1))=\bar{\varphi}_i(a(x_1))=\varphi_i(a)$, for $i=1,2$.
    Thus, $b$ separates the two states.

    Case 2: $x_1\neq x_2$.
    In this case, by condition (ii), there exists an element $b\in B$ such that $b(x_2)$ is full in $A(x_2)$ and $b(x_1)=0$.
    Since $\varphi_2\neq 0$, there exists an element $a\in A$ such that $|\varphi_2(a)|=|\bar{\varphi}_2(a(x_2))|\geq 1$.

    Since $b(x_2)$ is full, there exist finitely many elements $g_i,h_i\in A(x_2)$ such that $\|a(x_2)-\sum_i c_ib(x_2)d_i\|<1$.
    By condition (i), there exist elements $\tilde{g}_i,\tilde{h}_i\in B$ such that $\tilde{g}_i(x_2)=g_i$ and $\tilde{h}_i(x_2)=h_i$.
    Set $b':=\sum_i\tilde{c}_ib\tilde{d}_i$.
    Then $|\varphi_2(b')|=|\bar{\varphi}_2(b'(x_2))|>0$, while $b'(x_1)=0$.
    This shows that $b'$ separates the two states.

    It follows that $B$ separates the factor states of $A$, and therefore $B=A$ by the factorial Stone-Weierstrass conjecture, proved independently by Longo, \cite{Lon1984}, and Popa, \cite{Pop1984}.
\end{proof}

\begin{lma}
\label{lma:Reducing_by_1}
    Let $A$ be a unital \Cs{} with $\gen(A)\leq 3$.
    Then there exist a positive element $x\in A\otimes\mathcal{Z}_{2,3}$ and two positive, full elements $y',z'\in \mathcal{Z}_{2,3}$ such that $A\otimes\mathcal{Z}_{2,3}$ is generated by $x$ and $1\otimes y'$, and further $y'$ and $z'$ are orthogonal.
\end{lma}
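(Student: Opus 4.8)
The plan is to exhibit $A\otimes\mathcal{Z}_{2,3}$ as a continuous $C([0,1])$-algebra and to verify that the sub-\Cs{} $B:=C^*(x,1\otimes y')$ is everything by checking the two hypotheses of \autoref{lma:fibered_subalg}. Recall that $\mathcal{Z}_{2,3}$ is a continuous field over $[0,1]$ with fibre $M_6=M_2\otimes M_3$ over each interior point, $1_2\otimes M_3\cong M_3$ over $0$, and $M_2\otimes 1_3\cong M_2$ over $1$; tensoring with $A$ gives the fibres $A\otimes M_6$, $A\otimes M_3$ and $A\otimes M_2$. Fix self-adjoint generators $a_1,a_2,a_3$ of $A$ of norm at most $1$. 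The point of the lemma is to ``spend'' the matrix structure of $\mathcal{Z}_{2,3}$ so that, fibre by fibre, $x$ together with the single commutative element $1\otimes y'$ recovers all matrix units and all three generators, hence the whole fibre.

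First I construct $y'$ and $z'$. I take $y'$ to be a positive element of $\mathcal{Z}_{2,3}$ taking values in the diagonal matrices of $M_6$, with eigenvalue functions chosen so that: (a) the largest eigenvalue $\lambda(t)=\|y'(t)\|$ is strictly monotone on $[0,1]$ and separated from the rest of the spectrum by a fixed gap; (b) at interior points the remaining nonzero eigenvalues are pairwise distinct, so that functional calculus applied to $1\otimes y'$ recovers the associated spectral projections in the fibre; and (c) $y'$ retains a kernel of the \emph{endpoint-compatible} rank throughout, namely the eigenvalues collapse into pairs as $t\to 0$ (to land in $1_2\otimes M_3$) and into triples as $t\to 1$ (to land in $M_2\otimes 1_3$), the reserved kernel block having rank $2$ near $0$ and rank $3$ near $1$. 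I then let $z'$ be a positive element supported in this kernel, full (nonvanishing on $[0,1]$) and of the same endpoint-compatible ranks, so that $y'z'=0$ by construction. Both $y'$ and $z'$ are then positive and full.

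Next I construct $x$ and check fibrewise exhaustion (condition (i) of \autoref{lma:fibered_subalg}). I take $x$ to be a positive section interpolating, over each fibre, a matrix of ``dominant diagonal plus small off-diagonal'' type: on the diagonal I place $a_i+C\cdot 1$ for a large constant $C$, and in the off-diagonal positions small scalars, so that diagonal dominance keeps $x$ positive. Given the spectral projections $e_{ii}$ cut out by $1\otimes y'$ on the complement of the reserved kernel block, the corners $e_{ii}\,x(t)\,e_{jj}$ return the scalar off-diagonal entries, hence the matrix units there, while the diagonal corners return the $a_i$. The matrix units \emph{inside} the reserved kernel block, where $y'$ supplies no separation, are instead recovered from the corner of $x$ supported there: since its diagonal entries $a_i+C$ are invertible, an invertibility (polar-type) argument recovers both the matrix units and the generators placed in that corner. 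The only genuinely tight fibre is $A\otimes M_2$ over $t=1$, where the diagonal has room for only two generators; I therefore place $a_3$ off-diagonally in the form $a_3+\e\cdot 1$ with $\e>\|a_3\|$, so that $w:=e_{11}\,x(1)\,e_{22}=(a_3+\e)\otimes e_{12}$ satisfies $w^*w=(a_3+\e)^2\otimes e_{22}$, from which the invertibility of $a_3+\e$ lets me extract both $e_{12}$ and $a_3$ by functional calculus. Thus every fibre $A\otimes M_{k(t)}$ is exhausted by $C^*\!\big(x(t),1\otimes y'(t)\big)$.

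Finally, for the point-separation condition (ii) I use the isolated, strictly monotone top eigenvalue of $y'$: for distinct $t_0,t_1\in[0,1]$ the value $\lambda(t_1)$ lies above the gap and differs from $\lambda(t_0)$, hence $\lambda(t_1)\notin\operatorname{spec}(y'(t_0))$, so there is a continuous function $h$ vanishing on $\operatorname{spec}(y'(t_0))$ with $h(\lambda(t_1))\neq 0$; then $1_A\otimes h(y')\in B$ vanishes in the fibre over $t_0$ and is full in the fibre over $t_1$, being $1_A$ tensored with a nonzero element of a matrix algebra. By \autoref{lma:fibered_subalg} it follows that $B=A\otimes\mathcal{Z}_{2,3}$, and $x$, $1\otimes y'$, $z'$ have all the required properties. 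I expect the main obstacle to be the simultaneous bookkeeping in the construction: arranging $x$, $y'$, $z'$ as \emph{continuous} sections that respect the dimension-drop boundary conditions at $0$ and $1$, keep $x$ positive, still generate the severely collapsed fibre $A\otimes M_2$ (handled by the invertibility trick for $a_3$), and leave $y'$ a kernel of precisely the endpoint-compatible rank so that a full orthogonal $z'$ can be accommodated.
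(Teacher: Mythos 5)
You have located the right machinery (the paper proves this lemma by exhibiting $A\otimes\mathcal{Z}_{2,3}$ as a continuous $C([0,1])$-algebra and feeding it into \autoref{lma:fibered_subalg}, and your fibrewise extraction scheme --- scalar couplings between distinct spectral blocks of $1\otimes y'$, plus the invertible-entry trick $w^*w=(a_3+\varepsilon)^2\otimes e_{22}$ to recover a matrix unit and a generator from a single off-diagonal slot --- is essentially the paper's mechanism). But there is a genuine gap at the heart of your argument: the element $y'$ you postulate in condition (a) cannot exist in $\mathcal{Z}_{2,3}$. If the top eigenvalue $\lambda(t)$ of $y'(t)$ were separated from the rest of the spectrum by a fixed gap for \emph{all} $t\in[0,1]$, then the spectral projection $P(t)=\chi_{(\lambda(t)-g/2,\infty)}\bigl(y'(t)\bigr)$ would be a norm-continuous projection-valued section, i.e.\ a projection $P\in\mathcal{Z}_{2,3}$. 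Its rank $\mathrm{tr}\,P(t)$ is a continuous integer-valued function, hence constant; the boundary conditions force this rank to be divisible by $2$ at one endpoint and by $3$ at the other, hence by $6$, so $P=0$ or $P=1$. But $P=1$ means the whole spectrum sits in the top cluster, leaving $y'$ no kernel to support a full orthogonal $z'$. This is exactly the projectionlessness of the prime dimension drop algebra, and it is why the paper's $y'$ has eigenvalue pattern $\{1,\,1-t,\,0\}$ with multiplicities that \emph{exchange} at the endpoints: no gap can survive there. With any admissible $y'$ your verification of condition (ii) of \autoref{lma:fibered_subalg} over the whole interval collapses; in the paper's model, for instance, $\mathrm{spec}(y'(1))=\{0,1\}\subseteq\mathrm{spec}(y'(t_0))$ for every $t_0$, so no function of $1\otimes y'$ can vanish at the fibre over $t_0$ while being nonzero at the fibre over the endpoint.

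This obstruction is precisely what the paper engineers around, and it dictates a different global structure: the Stone--Weierstrass lemma is applied only to the ideal $J=E((0,1))$, where condition (ii) is cheap because the eigenvalue function $1-t$ is injective on $(0,1)$ and vanishes at the endpoints, so that $C^*(y')\supseteq C_0((0,1))\otimes e_{3,3}$; the endpoint quotient $E/J\cong A\otimes(M_2\oplus M_3)$ is then exhausted by a separate, explicit matrix-unit computation in which the two blocks are pried apart using their different sizes. So a repair of your proof must either adopt this ideal/quotient split, or replace the gap argument by a much more delicate choice of eigenvalue functions arranged so that the (finite) spectra $\mathrm{spec}(y'(t_1))\setminus\{0\}$ are never contained in $\mathrm{spec}(y'(t_0))$ for $t_0\neq t_1$ --- a fiddly genericity condition your gap hypothesis was meant to shortcut, and which also has to coexist with the multiplicity collapses forced at both endpoints. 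A secondary soft spot: inside the reserved kernel block, the corner $p\,x\,p$ is a \emph{single} positive element and so generates only a commutative algebra; the matrix units there must come from scalar couplings reaching \emph{into} the block from the spectrally separated part, which your sketch leaves implicit, whereas the paper's $\delta(t)\cdot f_{1,3}$ and $t\cdot f_{2,3}$, $t\cdot f_{4,5}$ entries are placed exactly to do this.
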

\begin{proof}
    We consider $\mathcal{Z}_{2,3}$ as the \Cs{} of continuous functions from $[0,1]$ to $M_6$ with the boundary conditions
    \begin{align*}
        f(0)=\begin{pmatrix}
            Y \\
            & Y \\
            & & Y
        \end{pmatrix}
        \quad
        f(1)=\begin{pmatrix}
            Z \\
            & QZQ^* \\
        \end{pmatrix},
    \end{align*}
    where $Y\in M_2$ and $Z\in M_3$ are arbitrary matrices, and $Q\in M_3$ is the following fixed permutation matrix:
    \begin{align*}
        Q=\begin{pmatrix}
            & & 1 \\
            1 \\
            & 1
        \end{pmatrix}.
    \end{align*}

    This means that $f(0),f(1)\in M_6$ have the following form:
    \begin{align*}
        f(0) = \left(\begin{array}{p{9pt}p{9pt}p{9pt}p{9pt}p{9pt}p{9pt}p{0pt}}
            $\mu_{11}$ & $\mu_{12}$ & \\
            $\mu_{21}$ & $\mu_{22}$ & \\
            & & $\mu_{11}$ & $\mu_{12}$ & \\
            & & $\mu_{21}$ & $\mu_{22}$ & \\
            & & & & $\mu_{11}$ & $\mu_{12}$ & \\
            & & & & $\mu_{21}$ & $\mu_{22}$ & \\
        \end{array}\right)
        f(1) = \left(\begin{array}{p{9pt}p{9pt}p{9pt}p{9pt}p{9pt}p{9pt}p{0pt}}
            $\lambda_{11}$ & $\lambda_{12}$ & $\lambda_{13}$ & \\
            $\lambda_{21}$ & $\lambda_{22}$ & $\lambda_{23}$ & \\
            $\lambda_{31}$ & $\lambda_{22}$ & $\lambda_{33}$ & \\
            & & & $\lambda_{33}$ & $\lambda_{31}$ & $\lambda_{32}$ & \\
            & & & $\lambda_{13}$ & $\lambda_{11}$ & $\lambda_{12}$ & \\
            & & & $\lambda_{23}$ & $\lambda_{21}$ & $\lambda_{22}$ & \\
        \end{array}\right),
    \end{align*}
    for numbers $\mu_{i,j},\lambda_{i,j}\in\CC$.

    Note that $\mathcal{Z}_{2,3}$ is naturally a continuous $C([0,1])$-algebra, with fibers $\mathcal{Z}_{2,3}(0)\cong M_2$, $\mathcal{Z}_{2,3}(1)\cong M_3$, and $\mathcal{Z}_{2,3}(t)\cong M_6$ for points $t\in (0,1)\subset[0,1]$.

    Let $a,b,c\in A$ be a set of invertible, positive generators for $A$.
    Denote by $e_{i,j}$ the matrix units in $M_6$.
    To shorten notation, for indices $i,j$ set $f_{i,j}:=e_{i,j}+e_{j,i}$.
    For $t\in[0,1]$ we define the following element of $A\otimes M_6$:
    \begin{align*}
        x_t := & a\otimes (e_{1,1} + (1-t)\cdot e_{3,3} + e_{5,5}) \\
            + & b\otimes (f_{1,2} + (1-t)\cdot f_{3,4} + f_{5,6}) \\
            + & c \otimes (e_{2,2} + (1-t)\cdot e_{4.4} + e_{6,6}) \\
            + & 1_A \otimes (t\cdot f_{2,3} + t\cdot f_{4,5} + \delta(t)\cdot f_{1,3}) \\
    \end{align*}
    where $\delta\colon[0,1]\to[0,1]$ is a continuous function on $[0,1]$ that takes the value $0$ at the endpoints $0$ and $1$, and is strictly positive at each point $t\in (0,1)$, e.g., $\delta$ could be given by $\delta(t)=1/4-(t-1/2)^2$.
    We also define for $t\in[0,1]$ two elements of $M_6$:
    \begin{align*}
        y_t' := & e_{1,1} + (1-t)\cdot e_{3,3} + e_{5,5} \\
        z_t' := & e_{2,2} + (1-t)\cdot e_{4,4} + e_{6,6}
    \end{align*}

    It is easy to check that the assignment $x\colon t\mapsto x_t$ defines an element $x\in A\otimes \mathcal{Z}_{2.3}$.
    Similarly, we get two elements $y',z'\in\mathcal{Z}_{2.3}$ defined via $t\mapsto y_t'$ and $t\mapsto z_t'$.
    In matrix form, these elements look as follows:
    \begin{align*}
        x_t &:=\left( \begin{array}{cc|cc|cc}
            a & b & \delta(t) & & \\
            b & c & t & & \\
            \hline 
            \delta(t) & t & (1-t)a & (1-t)b & \\
            & & (1-t)b & (1-t)c & t \\
            \hline 
            & & & t & a & b \\
            & & & & b & c\\
        \end{array} \right) \\
        y_t' &:=\left( \begin{array}{cc|cc|cc}
            1 & & & \\
            & & & & \\
            \hline 
            & & (1-t) & \\
            & & & & \\
            \hline 
            & & & & 1 \\
            & & & & & \\
        \end{array} \right)
        \quad
        z_t' :=\left( \begin{array}{cc|cc|cc}
            & & & \\
            & 1 & & & \\
            \hline 
            & & & \\
            & & & (1-t) & \\
            \hline 
            & & & & & \\
            & & & & & 1 \\
        \end{array} \right)
    \end{align*}

    Set $y:=1\otimes y'$, and let $D:=C^{*}(x+1,y)$ be the sub-\Cs{} of $E:=A\otimes \mathcal{Z}_{2,3}$ generated by the two self-adjoint elements $x+1$ and $y$.
    Since $x\geq 0$, we get that both $1$ and $x$ lie in $C^{*}(x+1)$.
    It follows that $D=C^{*}(1,x,y)$, and we will show that $D=E$.
    Note that $E$ has a natural continuous $C([0,1])$-algebra structure (induced by the one of $\mathcal{Z}_{2,3}$), with fibers $E(0)\cong A\otimes M_2$, $E(1)\cong A\otimes M_3$, and $E(t)\cong A\otimes M_6$ for points $t\in (0,1)\subset[0,1]$.

    Let $J:=E((0,1))\lhd E$ be the natural ideal corresponding to the open set $(0,1)\subset[0,1]$.
    Note that $J\cong A\otimes C_0((0,1))\otimes M_6$, and $J$ is naturally a continuous $C_{0}((0,1))$-algebra.
    We will show in two steps that $D$ exhausts the ideal $J$ (i.e., $D\cap J=J$) and the quotient $E/J$ (i.e., $D/(D\cap J)=E/J$).

    Step 1:
    We want to apply \autoref{lma:fibered_subalg} to the $C((0,1))$-algebra $J$ with sub-\Cs{} $D\cap J$.
    To verify condition (ii), note that the \Cs{} generated by $y'$ contains $C_0((0,1))\otimes e_{3,3}$.
    Therefore, $D\cap J$ contains $1_A\otimes C_0((0,1))\otimes e_{3,3}$, which separates the points of $(0,1)$.
    Since $1_A\otimes e_{3,3}\in E(t)\cong A\otimes M_6$ is full,  condition (ii) of  \autoref{lma:fibered_subalg} holds and it remains to verify condition (i).

    We need to show that $D\cap J$ exhausts all fibers of $J$.
    Fix some $t\in (0,1)$, and set $D_t:=C^*(1,x_t,y_t)\subset A\otimes M_6$.
    To simplify notation, we write $\bar{e}_{i,j}$ for the matrix units $1_A\otimes e_{i,j}\in A\otimes M_6$.
    We need to show that $D_t$ is all of $A\otimes M_6$.
    This will follow if $D_t$ contains all $\bar{e}_{i,j}$, and for this it is enough to show that the off-diagonal matrix units $\bar{e}_{i,i+1}$ are in $D_t$, for $i=1,\ldots,5$.

    The spectrum of $y_t$ is $\{0,1-t,1\}$.
    Applying functional calculus to $y_t$ we obtain that the following three elements lie in $D_t$:
    \begin{align*}
        u &:=\bar{e}_{1,1}+\bar{e}_{5,5} \\
        v &:=\bar{e}_{3,3} \\
        w &:=1-v-u = \bar{e}_{2,2}+\bar{e}_{4,4}+\bar{e}_{6,6}
    \end{align*}

    Then, we proceed as follows:
    \begin{enumerate}[1. ]
        \item
        $\bar{e}_{1,3} = \delta(t)^{-1}ux_tv\in D_t$ and so $\bar{e}_{1,1},\bar{e}_{5,5}\in D_t$.
        \item
        $g:=b\otimes e_{1,2} = \bar{e}_{1,1}x_tw\in D_t$.
         It follows $b\otimes e_{1,1}=(gg^*)^{1/2}\in D_t$, cf.\  \cite{OlsZam1976}.
        Then $b^{-1}\otimes e_{1,1}\in C^{*}(b\otimes e_{1,1})\subset D_t$ and so $\bar{e}_{1,2}=(b^{-1}\otimes e_{1,1})\cdot g\in D_t$
        and $\bar{e}_{2,2}\in D_t$.
        \item
        $b\otimes e_{3,4} = (1-t)^{-1}\bar{e}_{3,3}x_t(w-\bar{e}_{2,2})\in D_t$.
        Arguing as above, it follows that $\bar{e}_{3,4}\in D_t$, and then $\bar{e}_{4,4},\bar{e}_{6,6}\in D_t$.
        \item
        $\bar{e}_{2,3} = t^{-1}\bar{e}_{2,2}x_t\bar{e}_{3,3}\in D_t$.
        \item
        $\bar{e}_{4,5} = t^{-1}\bar{e}_{4,4}x_t\bar{e}_{5,5}\in D_t$.
        \item
        $b\otimes e_{5,6} = \bar{e}_{5,5}x_t\bar{e}_{6,6}\in D_t$ and so $\bar{e}_{5,6}\in D_t$.
    \end{enumerate}

    This shows that $D\cap J$ exhausts the fibers of $J$.
    We may apply \autoref{lma:fibered_subalg} and deduce $D\cap J=J$, which finishes step 1.

    Step 2:
    We want to show that $D/J$ exhausts $E/J=E(\{0,1\})\cong A\otimes (M_2 \oplus M_3)$.
    Let us denote the matrix units in $M_2$ by $e_{i,j}^{(0)}$, $i=1,2$, and the matrix units in $M_3$ by $e_{i,j}^{(1)}$, $i=1,2,3$.
    To simplify notation, we write $\bar{e}_{i,j}^{(k)}$ for the matrix units $1_A\otimes e_{i,j}^{(k)}\in A\otimes (M_2 \oplus M_3)$.
    Let us denote the image of $x$ and $y$ in $D/J$ by $v$ and $w$:
    \begin{align*}
        v &= a\otimes (e_{1,1}^{(0)} + e_{1,1}^{(1)})
        + b\otimes (e_{1,2}^{(0)} + e_{2,1}^{(0)} + e_{1,2}^{(1)} + e_{2,1}^{(1)} )
        + c\otimes (e_{2,2}^{(0)}+ e_{2,2}^{(1)})
        + \bar{e}_{2,3}^{(1)} + \bar{e}_{3,2}^{(1)}\\
        &= \begin{pmatrix}
            a & b\\
            b & c \\
        \end{pmatrix}
        \oplus
        \begin{pmatrix}
            a & b\\
            b & c & 1 \\
            & 1 & \\
        \end{pmatrix}
        \\
        w &= \bar{e}_{1,1}^{(0)} + \bar{e}_{1,1}^{(1)}
        =\begin{pmatrix}
            1 & 0 \\
            0 & 0 \\
        \end{pmatrix}
        \oplus
        \begin{pmatrix}
            1 & & \\
            & 0 & \\
            & & 0 \\
        \end{pmatrix}.
    \end{align*}
    As in step 1, it is enough to show that $D/J$ contains the off-diagonal matrix units $\bar{e}_{1,2}^{(0)}$, $\bar{e}_{1,2}^{(1)}$ and $\bar{e}_{2,3}^{(1)}$.
    We argue as follows:
    \begin{enumerate}[1.  ]
        \item
        $g:=wv(1-w)=b\otimes (e_{1,2}^{(0)} + e_{1,2}^{(1)})\in D/J$.
        As in step 1, it follows that $b\otimes (e_{1,1}^{(0)} + e_{1,1}^{(1)})=(gg^*)^{1/2}\in D/J$.
        Then $b^{-1}\otimes (e_{1,1}^{(0)} + e_{1,1}^{(1)})\in D/J$, and so $\bar{e}_{1,2}^{(0)} + \bar{e}_{1,2}^{(1)}=(b^{-1}\otimes (e_{1,1}^{(0)} + e_{1,1}^{(1)}))\cdot g\in D/J$.
        It follows that $\bar{e}_{2,2}^{(0)} + \bar{e}_{2,2}^{(1)}\in D/J$.
        \item
        $\bar{e}_{3,3}^{(1)}=1-w-(\bar{e}_{2,2}^{(0)} + \bar{e}_{2,2}^{(1)})\in D/J$.
        \item
        $\bar{e}_{2,3}^{(1)}= v \bar{e}_{3,3}^{(1)}\in D/J$, and so $\bar{e}_{2,2}^{(1)}\in D/J$.
        \item
        $b\otimes e_{1,2}^{(1)}=wv\bar{e}_{2,2}^{(1)}\in D/J$.
        Again, this implies $\bar{e}_{1,2}^{(1)}\in D/J$ and so $\bar{e}_{1,1}^{(1)}\in D/J$.
        \item
        $\bar{e}_{1,1}^{(0)}=w-\bar{e}_{1,1}^{(1)}\in D/J$.
        \item
        $\bar{e}_{2,2}^{(0)}=1-w-\bar{e}_{2,2}^{(1)}-\bar{e}_{3,3}^{(1)}\in D/J$.
        \item
        $b\otimes e_{1,2}^{(0)}=\bar{e}_{1,1}^{(0)}v\bar{e}_{2,2}^{(0)}\in D/J$.
        Again, this implies $\bar{e}_{1,2}^{(0)}\in D/J$.
    \end{enumerate}
    This finishes step 2.

    We have seen that $A\otimes\mathcal{Z}_{2,3}$ is generated by $x+1$ and $y$.
    Moreover, $z'$ is full, positive and orthogonal to $y'$.
\end{proof}

\begin{lma}
\label{lma:Tensor_inf_dd_2}
    Let $A$ be a separable, unital \Cs{}.
    Then there exist a positive element $x\in A\otimes \mathcal{Z}_{2^\infty,3^\infty}$ and two positive, full elements $y',z'\in \mathcal{Z}_{2^\infty,3^\infty}$ such that $A\otimes \mathcal{Z}_{2^\infty,3^\infty}$ is generated by $x$ and $y:=1\otimes y'$, and further $y'$ and $z'$ are orthogonal.
\end{lma}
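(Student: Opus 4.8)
The plan is to mimic the proof of \autoref{lma:Reducing_by_1}, viewing $E := A \otimes \mathcal{Z}_{2^\infty,3^\infty}$ as a continuous $C([0,1])$-algebra with fibers $E(0)\cong A\otimes M_{3^\infty}$, $E(1)\cong A\otimes M_{2^\infty}$ and $E(t)\cong A\otimes M_{6^\infty}$ for $t\in(0,1)$, and to produce the two generators by a single application of \autoref{lma:fibered_subalg}. As there, I would fix the ideal $J := E((0,1))\cong A\otimes C_0(0,1)\otimes M_{6^\infty}$ and the quotient $E/J \cong (A\otimes M_{3^\infty})\oplus(A\otimes M_{2^\infty})$, and show in two steps that $D := C^*(x,\,1\otimes y')$ exhausts both $J$ and $E/J$. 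The full, orthogonal, positive elements $y',z'$ I would simply inherit from the first-stage unital copy $\mathcal{Z}_{2,3}\subset\mathcal{Z}_{2^\infty,3^\infty}$, taking the very elements produced in \autoref{lma:Reducing_by_1}; then $1\otimes y'$ already supplies a full, $C_0(0,1)$-valued element separating the points of $(0,1)$, so condition (ii) of \autoref{lma:fibered_subalg} holds for $J$, while $z'$ is carried along to be output for later use.

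The essential new point, compared with \autoref{lma:Reducing_by_1}, is that $A$ is now an arbitrary separable unital \Cs{}, so there is no bound on the number of its self-adjoint generators; but the interior fibers are now $A\otimes M_{6^\infty}$, and the infinitely many matrix units of $M_{6^\infty}=\overline{\bigcup_n M_{6^n}}$ provide infinitely many diagonal slots into which an entire countable generating sequence of $A$ can be placed. Concretely, I would choose positive generators $a_1,a_2,\ldots$ of $A$ with pairwise disjoint, shrinking spectra (as in the proof of \autoref{prop:Gen2_prop_inf}), and build $x$ as a norm-convergent sum that distributes the $a_k$ along the diagonal of $M_{6^\infty}$, level by level along the tower $M_{6^n}\subset M_{6^{n+1}}$, together with $t$-dependent off-diagonal ``dimension-drop'' connectors of the type used in \autoref{lma:Reducing_by_1} (vanishing at $t=0,1$ to respect the boundary conditions, strictly positive in the interior to link the blocks). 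Spectral separation then lets functional calculus in $C^*(x)$ isolate each diagonal block, after which the matrix-unit recovery of \autoref{lma:Reducing_by_1} — the step-by-step computations producing the $\bar{e}_{i,j}$ and the elements $b\otimes e_{i,i}$ — runs in every fiber, so that $D$ exhausts each fiber $A\otimes M_{6^\infty}$ and, by the analogous endpoint computation, each summand of $E/J$.

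With fiber exhaustion (condition (i)) and separation by full elements (condition (ii)) in hand, \autoref{lma:fibered_subalg} gives $D=E$; and by construction $x\geq 0$, $y=1\otimes y'$, and $y',z'$ are full, positive and orthogonal, as required.

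I expect the main obstacle to be the analytic bookkeeping of the infinite construction. One must arrange the diagonal placements and the connectors so that the defining series converges to a genuine element $x\in A\otimes\mathcal{Z}_{2^\infty,3^\infty}$ — in particular so that it meets the boundary conditions at the two endpoints — while simultaneously keeping the spectral blocks separated enough that the recovery of the matrix units and of \emph{every} $a_k$ succeeds uniformly across all fibers. At each finite level this is exactly the explicit computation of \autoref{lma:Reducing_by_1}; the real work is to make the passage to the limit compatible across the tower $M_{6^n}\subset M_{6^{n+1}}$, so that the combination of the dimension-drop gluing with the shrinking-spectra trick yields a single positive generator rather than merely a generator at each stage.
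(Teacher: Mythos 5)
Your plan defers exactly the step that constitutes the entire difficulty, and the mechanism you propose for it would fail as stated. In \autoref{lma:Reducing_by_1}, the recovery of the matrix units is driven by the \emph{second} generator: at each interior fiber the element $y_t$ has spectrum $\{0,1-t,1\}$, and its spectral projections are what cut $x_t$ into the pieces from which the $\bar{e}_{i,j}$ are extracted. If, as you propose, $y'$ is simply inherited from a unital copy of $\mathcal{Z}_{2,3}\subset\mathcal{Z}_{2^\infty,3^\infty}$, then $y$ still has at most three spectral values in each fiber, so functional calculus on $y$ decomposes each interior fiber $A\otimes M_{6^\infty}$ into only three corners and cannot isolate the infinitely many diagonal slots into which you have distributed the $a_k$. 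The shrinking-spectra trick you import from the proof of \autoref{prop:Gen2_prop_inf} does not substitute for this: there, the recovery of $a_k$ and of the range projections hinges on the isometries $s_k$ with pairwise orthogonal ranges, which exist because the algebra is properly infinite; in $A\otimes\mathcal{Z}_{2^\infty,3^\infty}$, which may be stably finite (take $A=\CC$), no such isometries exist, and the off-diagonal connectors together with the couplings $b\otimes f_{i,j}$ destroy the spectral separation of the diagonal blocks of $x_t$. So what you call ``analytic bookkeeping'' is not bookkeeping: no isolation mechanism is actually specified, and condition (i) of \autoref{lma:fibered_subalg} --- exhaustion of the fibers $A\otimes M_{6^\infty}$, $A\otimes M_{3^\infty}$ and $A\otimes M_{2^\infty}$ --- is never established.

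The paper's proof avoids any infinite construction by a finite bootstrap, and this is the idea missing from your proposal. Set $B:=A\otimes\mathcal{Z}_{2^\infty,3^\infty}$. By \autoref{lma:Getting_5} one has $\gen(B)\leq 5$. If $B$ is generated by $k+1\geq 3$ self-adjoint (invertible) elements, apply \autoref{lma:Reducing_by_1} to the unital sub-\Cs{} generated by the last \emph{three} of them to conclude that $B\otimes\mathcal{Z}_{2,3}$ is generated by $k$ self-adjoint elements; since restriction to the diagonal of $[0,1]\times[0,1]$ gives a surjection $\pi\colon\mathcal{Z}_{2^\infty,3^\infty}\otimes\mathcal{Z}_{2,3}\to\mathcal{Z}_{2^\infty,3^\infty}$, the algebra $B$ is a quotient of $B\otimes\mathcal{Z}_{2,3}$, whence $\gen(B)\leq k$. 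Iterating gives $\gen(B)\leq 3$, and one final application of \autoref{lma:Reducing_by_1} to $B$ itself, pushed forward through $\id\otimes\pi$, produces $x$, $y'$, $z'$: positivity, fullness, orthogonality and the generating property all pass to quotients. Thus only the finite, explicit computation of \autoref{lma:Reducing_by_1} is ever needed; the self-absorption of $\mathcal{Z}_{2^\infty,3^\infty}$ under tensoring with $\mathcal{Z}_{2,3}$, not a direct infinite construction, is what makes the lemma go through.
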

\begin{proof}
    Let $B:=A\otimes\mathcal{Z}_{2^\infty,3^\infty}$.
    Note that $\mathcal{Z}_{2^\infty,3^\infty}\otimes\mathcal{Z}_{2,3}$ is naturally a $C([0,1]\times[0,1]$)-algebra.
    Then, the quotient corresponding to the diagonal $\{(t,t)\setDefSep t\in[0,1]\}\subset[0,1]\times[0,1]$ is isomorphic to $\mathcal{Z}_{2^\infty,3^\infty}$, and we denote the resulting surjective morphism by $\pi\colon\mathcal{Z}_{2^\infty,3^\infty}\otimes\mathcal{Z}_{2,3}\to\mathcal{Z}_{2^\infty,3^\infty}$.
    We proceed in two steps.

    Step $1$:
    We show that $\gen(B)\leq k+1$ implies $\gen(B)\leq k$ for $k\geq 2$.
    So assume $B$ is generated by the self-adjoint, invertible elements $a_1,\ldots,a_{k+1}$.
    The sub-\Cs{} $C:=C^{*}(a_{k-1},a_k,a_{k+1})\subset B$ is unital and satisfies $\gen(C)\leq 3$.
    Consider the \Cs{} $B\otimes\mathcal{Z}_{2,3}$.
    By \autoref{lma:Reducing_by_1}, the sub-\Cs{} $C\otimes\mathcal{Z}_{2,3}$ is generated by two self-adjoint elements, say $b,c$.

    One readily checks that $B\otimes\mathcal{Z}_{2,3}$ is generated by the $k$ self-adjoint elements $a_1\otimes 1,\ldots,a_{k-2}\otimes 1,b,c$.
    Since $B=A\otimes\mathcal{Z}_{2^\infty,3^\infty}$ is isomorphic to a quotient of $B\otimes\mathcal{Z}_{2,3}=A\otimes\mathcal{Z}_{2^\infty,3^\infty}\otimes\mathcal{Z}_{2,3}$, we obtain $\gen(B)\leq\gen(B\otimes\mathcal{Z}_{2,3})\leq k$.

    Step $2$:
    By \autoref{lma:Getting_5}, we have $\gen(B)\leq 5$.
    Applying Step $1$ several times, we obtain $\gen(B)\leq 3$.

    It follows from \autoref{lma:Reducing_by_1} that there exists a positive element $\tilde{x}\in B\otimes\mathcal{Z}_{2,3}$ and two positive, full elements $\tilde{y}',\tilde{z}'\in \mathcal{Z}_{2,3}$ such that $B\otimes\mathcal{Z}_{2,3}$ is generated by $\tilde{x}$ and $1\otimes \tilde{y}'$, and further $\tilde{y}'$ and $\tilde{z}'$ are orthogonal.

    Consider the surjective morphism $\id\otimes\pi\colon A\otimes\mathcal{Z}_{2^\infty,3^\infty}\otimes\mathcal{Z}_{2,3} \to A\otimes\mathcal{Z}_{2^\infty,3^\infty}$.
    One checks that the elements $x:=(\id\otimes\pi)(\tilde{x})\in A\otimes\mathcal{Z}_{2^\infty,3^\infty}$, and $y':=\pi(\tilde{y}'), z':=\pi(\tilde{z}')\in\mathcal{Z}_{2^\infty,3^\infty}$ have the desired properties.
\end{proof}

\begin{thm}
\label{thm:Tensor_gen2}
    Let $A, B$ be two separable, unital \Cs{s}.
    Assume the following:
    \begin{enumerate}
        \item
        $A$ contains a sequence $a_1,a_2,\ldots$ of full, positive elements that are pairwise orthogonal,
        \item
        $B$ admits a unital embedding of the Jiang-Su algebra $\mathcal{Z}$.
    \end{enumerate}
    Then $A\otimes_\mx B$ is singly generated.
    Every other tensor product $A\otimes_\lambda B$ is a quotient of $A\otimes_\mx B$, and therefore is also singly generated.
\end{thm}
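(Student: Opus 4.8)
The plan is to pass to the maximal tensor product and then exhibit two self-adjoint generators of $A\otimes_\mx B$, using the orthogonal full sequence in $A$ as a \emph{frame} into whose slots the generators of $B$ are packed, while \autoref{lma:Tensor_inf_dd_2} supplies a generating pair for a large distinguished subalgebra. The final sentence of the statement is the easy part and I would dispose of it first: for any $C^*$-norm $\lambda$ the identity on the algebraic tensor product extends to a surjection $A\otimes_\mx B\to A\otimes_\lambda B$, so $A\otimes_\lambda B$ is a quotient of $A\otimes_\mx B$ and hence $\gen(A\otimes_\lambda B)\le\gen(A\otimes_\mx B)$ by \ref{pargr:Generators}. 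Thus it suffices to prove $\gen(A\otimes_\mx B)\le 2$.

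To set the stage, note that by hypothesis (2) and \ref{pargr:Z} there is a unital embedding $\mathcal Z_{2^\infty,3^\infty}\hookrightarrow\mathcal Z\hookrightarrow B$; since $\mathcal Z_{2^\infty,3^\infty}$ is nuclear, tensoring with $\id_A$ gives a unital embedding $A\otimes\mathcal Z_{2^\infty,3^\infty}\hookrightarrow A\otimes_\mx B$ onto a sub-\Cs{} containing the unit. Applying \autoref{lma:Tensor_inf_dd_2} to the separable unital \Cs{} $A$, I obtain a positive $x_0$ and an element $y_0=1\otimes y'$ generating $A\otimes\mathcal Z_{2^\infty,3^\infty}$, together with a spare full positive $z'\in\mathcal Z_{2^\infty,3^\infty}$ orthogonal to $y'$. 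This step does two things: it places $A\otimes 1$ and all of $1\otimes\mathcal Z_{2^\infty,3^\infty}$ (in particular $1\otimes z'$) inside the subalgebra $C^*(x_0,y_0)$ that is already two-generated, and it hands us, via $z'$, a full positive element orthogonal to $y_0$ which is free to carry extra information.

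For the main construction, choose self-adjoint generators $b_1,b_2,\dots$ of $B$; since $A\otimes 1$ and $\{1\otimes b_n\}$ generate $A\otimes_\mx B$ and $A\otimes 1\subseteq C^*(x_0,y_0)$, it is enough to bring every $1\otimes b_n$ into our two-generated algebra. Here hypothesis (1) enters: the elements $q_n:=a_n\otimes z'$ are positive, pairwise orthogonal (as $a_na_m=0$), orthogonal to $y_0$ (as $z'y'=0$), and full in $A\otimes_\mx B$ (as $a_n$ is full in $A$ and $z'$ is full in $B$). After applying functional calculus so that the nonzero spectra of the $q_n$ lie in pairwise disjoint windows accumulating only at $0$, I would take the two generators to be $x_0$ and a single self-adjoint $T$ that agrees with $y_0$ on the $y'$-part and, in the mutually orthogonal slots cut out by the $q_n$, carries an encoded copy of $1\otimes b_n$. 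The orthogonality to $y_0$ is exactly what lets functional calculus on $T$ peel off $y_0$ — recovering $C^*(x_0,y_0)=A\otimes\mathcal Z_{2^\infty,3^\infty}$ — and isolate each slot; compressing by the spectral data of $q_n$ then extracts the $n$-th datum, and fullness of $a_n$, using multipliers drawn from $A\otimes 1\subseteq C^*(x_0,y_0)$, is used to reconstitute $1\otimes b_n$, just as full elements move mass around in the proof of \autoref{lma:fibered_subalg}.

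The step I expect to be the main obstacle is precisely this faithful reconstruction of $1\otimes b_n$ from its slotted, $z'$-compressed image: positive full elements, unlike isometries, do not furnish corner isomorphisms, so conjugation by $z'$ a priori destroys information, and one must design the encoding (exploiting the internal structure of $\mathcal Z_{2^\infty,3^\infty}$ and the fullness of $z'$) so that $1\otimes b_n$ genuinely lands in $C^*(x_0,T)$. A robust way to finish that sidesteps exact recovery is to set $D:=C^*(x_0,T)$ and instead show that $D$ separates the factor states of $A\otimes_\mx B$ — distinct factor states either already differ on $A\otimes\mathcal Z_{2^\infty,3^\infty}\subseteq D$, or are separated by one of the full slot elements — and then to conclude $D=A\otimes_\mx B$ from the factorial Stone--Weierstrass theorem of Longo and Popa, exactly as in \autoref{lma:fibered_subalg}. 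The remaining bookkeeping, namely convergence of the sum defining $T$ and disjointness of the spectral windows, is routine.
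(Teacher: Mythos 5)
Your architecture is the same as the paper's: two-generate $A\otimes\mathcal{Z}_{2^\infty,3^\infty}$ via \autoref{lma:Tensor_inf_dd_2}, keep $x_0$ as one generator, and perturb $y_0=1\otimes y'$ by a sum supported in the mutually orthogonal slots $a_k\otimes z'(\cdot)z'$; orthogonality ($y'z'=0$) recovers $1\otimes y'$ as the positive part of the perturbed generator, and the $k$-th slot is isolated simply by multiplying with $a_k\otimes 1\in C^*(x_0,y_0)$ --- your disjoint spectral windows are unnecessary for this. However, the obstacle you flag is a genuine gap, and your fallback does not repair it. If the $b_n$ are merely self-adjoint generators of $B$, then after extracting the slots and using fullness of $a_n$ you obtain only the elements $1\otimes z'b_nz'$, and these generate $C^*(\{z'b_nz'\})$, which in general is a proper subalgebra of the hereditary subalgebra $1\otimes\overline{z'Bz'}$: compression by $z'$ is not multiplicative, so $z'b_nb_mz'$ is not recoverable from $z'b_nz'$ and $z'b_mz'$, and $1\otimes B\subset D$ does not follow. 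The Stone--Weierstrass fallback does not close this either: the factorial theorem of Longo--Popa yields $D=A\otimes_\mx B$ only if you can actually \emph{verify} that $D$ separates the factor states, and the separation argument of \autoref{lma:fibered_subalg} leaned on a central $C_0(X)$-structure that pins every factor representation to a fiber; $A\otimes_\mx B$ carries no such structure here, and a factor state is not determined by its values on $A\otimes\mathcal{Z}_{2^\infty,3^\infty}$ together with countably many compressed generators --- indeed, if $C^*(A\otimes\mathcal{Z}_{2^\infty,3^\infty},\,1\otimes z'b_nz' : n)$ is proper, no Stone--Weierstrass argument can show it is everything.

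The missing idea, and the paper's key move, is to encode not generators but a sequence $b_1,b_2,\ldots$ that is \emph{dense} in the positive contractions of $B$, setting $w:=1\otimes y'-\sum_{k\geq 1}2^{-k}\,a_k\otimes(z'b_kz')$. Then slot extraction gives $1\otimes z'b_kz'\in D$ for all $k$, and density (plus linearity) yields $1\otimes z'bz'\in D$ for \emph{every} $b\in B$, i.e., the entire hereditary subalgebra $1\otimes\overline{z'Bz'}\subset D$ --- so no inversion of the compression is ever attempted. Fullness of $z'$ in $\mathcal{Z}_{2^\infty,3^\infty}$ then finishes the proof: write $1_B=\sum_i c_iz'd_i$ with $c_i,d_i\in\mathcal{Z}_{2^\infty,3^\infty}$, note $1\otimes c_i,\,1\otimes d_i\in D$ since $1\otimes\mathcal{Z}_{2^\infty,3^\infty}\subset C^*(x_0,y_0)\subset D$, whence $1\otimes bz'=\sum_i(1\otimes c_i)(1\otimes z'd_ibz')\in D$ and then $1\otimes b=\sum_i(1\otimes bc_iz')(1\otimes d_i)\in D$. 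With this single change --- density in place of generation, hereditary subalgebra plus fullness of $z'$ in place of exact reconstruction --- your construction becomes exactly the paper's proof; your treatment of the final quotient statement is correct as written.
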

\begin{proof}
    There exists a unital embedding of $\mathcal{Z}_{2^\infty,3^\infty}$ in $\mathcal{Z}$, so we may assume that there is a unital embedding of $\mathcal{Z}_{2^\infty,3^\infty}$ in $B$.
    We may assume that the elements $a_1,a_2,\ldots\in A$ are contractive.

    Choose a sequence $b_1,b_2,\ldots\in B$ of contractive, positive elements that is dense in the set of all contractive, positive elements of $B$.

    Consider the sub-\Cs{} $A\otimes \mathcal{Z}_{2^\infty,3^\infty}\subset A\otimes_\mx B$.
    By \autoref{lma:Tensor_inf_dd_2}, there exist a positive element $x\in A\otimes \mathcal{Z}_{2^\infty,3^\infty}$ and two full, positive elements $y',z'\in \mathcal{Z}_{2^\infty,3^\infty}$ such that $A\otimes \mathcal{Z}_{2^\infty,3^\infty}$ is generated by $x$ and $y:=1\otimes y'$, and further $y'$ and $z'$ are orthogonal.

    Define the following two elements of $A\otimes_\mx B$:
    \begin{align*}
        v := x, \quad\quad\quad w:= 1\otimes y' - \sum_{k\geq 1} 1/2^k\cdot a_k\otimes (z'b_kz').
    \end{align*}

    Let $D:=C^*(v,w)$ be the sub-\Cs{} of $A\otimes_\mx B$ generated by $v$ and $w$.
    We claim that $D=A\otimes B$.

    Step $1$: We show $A\otimes \mathcal{Z}_{2^\infty,3^\infty}\subset D$.
    Note that the two elements $1\otimes y'$ and $\sum_{k\geq 1} 1/2^k\cdot a_k\otimes (z'b_kz')$ are positive and orthogonal.
    It follows that $1\otimes y'$ is the positive part of $w$, and therefore $1\otimes y'\in D$.
    Therefore, $C^*(v,1\otimes y')=A\otimes \mathcal{Z}_{2^\infty,3^\infty}\subset D$.

    Step $2$: We show $1\otimes B\subset D$.
    We have $g:=\sum_{k\geq 1} 1/2^k\cdot a_k\otimes (z'b_kz')\in D$.
    It follows from Step $1$ that $a_k\otimes 1\in D$, and so $a_k^2\otimes (z'b_kz')=2^k\cdot(a_k\otimes 1)g\in D$.
    Since $a_k^2$ is full, there exist finitely many elements $c_i,d_i\in A$ such that $1_A=\sum_ic_ia_k^2d_i$.
    By Step $1$, we have $c_i\otimes 1,d_i\otimes 1\in D$.
    Then $1\otimes (z'b_kz') = \sum_i(c_i\otimes 1) (a_k^2\otimes (z'b_kz')) (d_i\otimes 1)\in D$, for each $k$.

    Let $b\in B$ be a contractive, positive element.
    Then $b=\lim_j b_{k(j)}$ for certain indices $k(j)$.
    Then $1\otimes (z'bz') = \lim_j 1\otimes (z'b_{k(j)}z')\in D$.
    It follows that the hereditary sub-\Cs{} $1\otimes \overline{z'Bz'}$ is contained in $D$.
    Since $z'$ is full in $\mathcal{Z}_{2^\infty,3^\infty}$, there exist finitely many elements $c_i,d_i\in \mathcal{Z}_{2^\infty,3^\infty}$ such that $1_B=\sum_ic_iz'd_i$.
    We have seen that $1\otimes z'bz'\in D$ for any $b\in B$.
    Then $1\otimes bz' = \sum_i (1\otimes c_i)(1\otimes z'd_ibz')\in D$ for any $b\in B$.
    Similarly $1\otimes b = \sum_i (1\otimes bc_iz')(1\otimes d_i)\in D$ for any $b\in B$, as desired.

    It follows from Steps $1$ and $2$ that for each $a\in A$ and $b\in B$ the simple tensor $a\otimes b$ is contained in $D$.
    The conclusion follows since $A\otimes_\mx B$ is the closure of the linear span of simple tensors.
\end{proof}

\begin{cor}
\label{prop:Gen2_both_Z_embed}
    Let $A, B$ be two separable, unital \Cs{s} that both admit a unital embedding of the Jiang-Su algebra $\mathcal{Z}$.
    Then $A\otimes_\mx B$ is singly generated.
\end{cor}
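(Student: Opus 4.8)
The plan is to deduce this directly from \autoref{thm:Tensor_gen2} by verifying its two hypotheses. Condition $(2)$ of the theorem is immediate, since $B$ is assumed to admit a unital embedding of $\mathcal{Z}$. All the work therefore lies in checking condition $(1)$ for $A$, namely that $A$ contains a sequence of pairwise orthogonal, full, positive elements.

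First I would fix a unital embedding $\mathcal{Z}\hookrightarrow A$ and produce inside $\mathcal{Z}$ a sequence $z_1,z_2,\ldots$ of nonzero, pairwise orthogonal, positive elements. This is possible because $\mathcal{Z}$ is infinite-dimensional: choosing a positive element $h\in\mathcal{Z}$ with infinite spectrum (for instance the image of $t\mapsto t\cdot 1$ under $\mathcal{Z}_{2,3}\hookrightarrow\mathcal{Z}_{2^\infty,3^\infty}\hookrightarrow\mathcal{Z}$) together with pairwise disjoint open intervals $U_1,U_2,\ldots$ each meeting the spectrum of $h$, the elements $z_k:=f_k(h)$, for continuous functions $f_k\geq 0$ supported on $U_k$, are nonzero, positive, and pairwise orthogonal.

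The key point is then that each $z_k$ is full not merely in $\mathcal{Z}$ but in all of $A$. Since $\mathcal{Z}$ is simple, every nonzero $z_k$ generates $\mathcal{Z}$ as an ideal, so the closed two-sided ideal of $\mathcal{Z}$ generated by $z_k$ contains the unit $1_\mathcal{Z}=1_A$. As the embedding is unital, approximating $1_\mathcal{Z}$ by finite sums $\sum_i c_i z_k d_i$ with $c_i,d_i\in\mathcal{Z}\subset A$ shows that the closed two-sided ideal of $A$ generated by $z_k$ contains $\mathcal{Z}$, and in particular contains $1_A$; hence it equals $A$, so $z_k$ is full in $A$. Thus the sequence $(z_k)$ witnesses condition $(1)$ of \autoref{thm:Tensor_gen2}, and applying the theorem shows that $A\otimes_\mx B$ is singly generated.

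I do not expect any genuine obstacle here: the argument is a routine unpacking of the hypotheses against \autoref{thm:Tensor_gen2}. The only slightly delicate point is the promotion of fullness in $\mathcal{Z}$ to fullness in $A$, which is exactly where the hypothesis that the embedding is \emph{unital} is used.
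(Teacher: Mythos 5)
Your proof is correct and takes essentially the same approach as the paper: the paper's own proof is a one-line remark that condition (1) of \autoref{thm:Tensor_gen2} is ``easy to verify'' when $A$ admits a unital embedding of $\mathcal{Z}$, and your argument simply supplies that routine verification. Your filling-in of the details (orthogonal positive elements via functional calculus on an element with infinite spectrum, and fullness in $A$ from simplicity of $\mathcal{Z}$ together with unitality of the embedding) is accurate, including the correct identification of where unitality is needed.
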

\begin{proof}
    It is easy to verify that condition $(i)$ of \autoref{thm:Tensor_gen2} is fulfilled if $A$ admits a unital embedding of $\mathcal{Z}$.
\end{proof}

\begin{thm}
\label{prop:Gen2_Z-stable}
    Let $A$ be a unital, separable \Cs{}.
    Then $A\otimes\mathcal{Z}$ is singly generated.
\end{thm}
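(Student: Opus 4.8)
The plan is to exploit the fact that $\mathcal{Z}$ is strongly self-absorbing in order to reduce the statement to \autoref{prop:Gen2_both_Z_embed}. The key observation is that one cannot apply \autoref{thm:Tensor_gen2} to the factorization $A\otimes\mathcal{Z}$ directly, because the first factor $A$ need not contain an infinite sequence of pairwise orthogonal full elements (for instance, $A=\CC$ does not). The remedy is to move a spare copy of $\mathcal{Z}$ into the first tensor factor, so that \emph{both} factors become $\mathcal{Z}$-absorbing.

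Concretely, first I would invoke the isomorphism $\mathcal{Z}\cong\mathcal{Z}\otimes\mathcal{Z}$ of Jiang and Su, which yields
\[
    A\otimes\mathcal{Z}\cong A\otimes\mathcal{Z}\otimes\mathcal{Z}\cong(A\otimes\mathcal{Z})\otimes\mathcal{Z}.
\]
Both tensor factors on the right-hand side are separable and unital, and both admit a unital embedding of $\mathcal{Z}$: the second one trivially via the identity, and the first one via the unital morphism $\mathcal{Z}\to A\otimes\mathcal{Z}$, $z\mapsto 1_A\otimes z$. Since $\mathcal{Z}$ is nuclear, there is only one $C^*$-tensor norm, so $(A\otimes\mathcal{Z})\otimes\mathcal{Z}=(A\otimes\mathcal{Z})\otimes_\mx\mathcal{Z}$; applying \autoref{prop:Gen2_both_Z_embed} to this maximal tensor product shows it is singly generated, and hence so is the isomorphic algebra $A\otimes\mathcal{Z}$.

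I do not expect any genuine obstacle at this stage: all of the substantive work has already been carried out in \autoref{lma:Reducing_by_1}, \autoref{lma:Tensor_inf_dd_2} and \autoref{thm:Tensor_gen2}. The only thing one must notice is that self-absorption of $\mathcal{Z}$ is precisely what lets one manufacture the orthogonal sequence of full elements demanded by hypothesis (i) of \autoref{thm:Tensor_gen2}---repackaged as the clean hypothesis of \autoref{prop:Gen2_both_Z_embed}---simply by tensoring one extra copy of $\mathcal{Z}$ onto $A$ and absorbing it back in afterwards.
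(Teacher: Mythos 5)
Your proposal is correct and is essentially identical to the paper's own proof: the paper likewise uses the isomorphism $A\otimes\mathcal{Z}\cong(A\otimes\mathcal{Z})\otimes\mathcal{Z}$ and then applies \autoref{prop:Gen2_both_Z_embed}, since both factors admit unital embeddings of $\mathcal{Z}$. Your additional remarks (nuclearity of $\mathcal{Z}$ making the tensor norm unique, and the reason hypothesis (1) of \autoref{thm:Tensor_gen2} cannot be applied to $A$ directly) are accurate but not needed beyond what the paper records.
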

\begin{proof}
    Note that $A\otimes\mathcal{Z}\cong (A\otimes\mathcal{Z})\otimes\mathcal{Z}$.
    It is clear that both $A\otimes\mathcal{Z}$ and $\mathcal{Z}$ admit unital embeddings of $\mathcal{Z}$.
    Then apply the above \autoref{prop:Gen2_both_Z_embed}.
\end{proof}

\begin{cor}
\label{prop:Gen3_non-unital_Z-stable}
    Let $A$ be a separable \Cs{}.
    Then $\gen(A\otimes \mathcal{Z})\leq 3$.
\end{cor}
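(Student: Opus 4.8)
The plan is to reduce the possibly non-unital case to the unital case already handled in \autoref{prop:Gen2_Z-stable}, by passing to the minimal unitization and then viewing $A\otimes\mathcal{Z}$ as an ideal of the unitized algebra. The whole argument should be very short, since the two ingredients we need are both available: single generation in the unital case, and the ideal estimate of \autoref{prop:Gen_ideal}.

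First I would form the minimal unitization $\widetilde{A}$, which is again unital and separable. Applying \autoref{prop:Gen2_Z-stable} to $\widetilde{A}$ gives that $\widetilde{A}\otimes\mathcal{Z}$ is singly generated, i.e.\ $\gen(\widetilde{A}\otimes\mathcal{Z})\leq 2$. Next I would observe that $A\otimes\mathcal{Z}$ sits inside $\widetilde{A}\otimes\mathcal{Z}$ as an ideal. Indeed, $A$ is the kernel of the canonical character $\widetilde{A}\to\CC$, so that $0\to A\to\widetilde{A}\to\CC\to 0$ is exact; since $\mathcal{Z}$ is nuclear (equivalently exact), tensoring with $\mathcal{Z}$ preserves exactness, and we obtain a short exact sequence $0\to A\otimes\mathcal{Z}\to\widetilde{A}\otimes\mathcal{Z}\to\mathcal{Z}\to 0$ exhibiting $A\otimes\mathcal{Z}$ as an ideal of $\widetilde{A}\otimes\mathcal{Z}$.

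Finally I would apply \autoref{prop:Gen_ideal} to this ideal, which yields
$\gen(A\otimes\mathcal{Z})\leq\gen(\widetilde{A}\otimes\mathcal{Z})+1\leq 2+1=3$,
as claimed.

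The only point requiring a moment of care — and the closest thing to an obstacle in an otherwise formal argument — is verifying that $A\otimes\mathcal{Z}$ is genuinely an ideal of $\widetilde{A}\otimes\mathcal{Z}$. This is exactly where nuclearity of $\mathcal{Z}$ enters: it guarantees that the tensor product is unambiguous and that the short exact sequence above stays exact, so that the ideal structure is preserved. Everything else is a direct invocation of the two cited results.
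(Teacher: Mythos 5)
Your proposal is correct and follows exactly the same route as the paper: unitize, apply \autoref{prop:Gen2_Z-stable} to get $\gen(\widetilde{A}\otimes\mathcal{Z})\leq 2$, view $A\otimes\mathcal{Z}$ as an ideal in $\widetilde{A}\otimes\mathcal{Z}$, and invoke \autoref{prop:Gen_ideal}. The only difference is that you spell out why $A\otimes\mathcal{Z}$ is an ideal (exactness of tensoring with the nuclear algebra $\mathcal{Z}$), a point the paper's proof takes as given; your justification is correct.
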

\begin{proof}
    Let $\widetilde{A}$ be the minimal unitization of $A$.
    It follows from \autoref{prop:Gen2_Z-stable} that $\gen(\widetilde{A}\otimes\mathcal{Z})\leq 2$.
    Since $A\otimes\mathcal{Z}$ is an ideal in $\widetilde{A}\otimes\mathcal{Z}$, we get $\gen(A\otimes\mathcal{Z})\leq\gen(\widetilde{A}\otimes\mathcal{Z})+1\leq 3$ from \autoref{prop:Gen_ideal}, as desired.
\end{proof}

\noindent
    Our results allow us to give new proofs for results about single generation of certain  von Neumann algebras.

\begin{prop}
\label{prop:tensor_vNalg}
    Assume $M,N$ are separably-acting von Neumann algebras that both admit a unital embedding of the hyperfinite $\text{II}_1$-factor.
    Then $M\bar{\otimes}N$ is singly generated.
\end{prop}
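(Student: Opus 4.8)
The plan is to descend from the von Neumann setting to the level of separable \Cs{s}, apply \autoref{prop:Gen2_both_Z_embed}, and then lift single generation back up by a weak-density argument. The one input I would isolate first is that the Jiang-Su algebra embeds unitally into the hyperfinite $\text{II}_1$-factor $\mathcal{R}$: since $\mathcal{Z}$ is simple and carries a (unique, faithful) tracial state $\tau$, the GNS representation $\pi_\tau$ is faithful and satisfies $\pi_\tau(\mathcal{Z})''\cong\mathcal{R}$, so $\pi_\tau\colon\mathcal{Z}\to\mathcal{R}$ is a unital embedding. Composing with the hypothesized unital embeddings $\mathcal{R}\hookrightarrow M$ and $\mathcal{R}\hookrightarrow N$ yields unital copies of $\mathcal{Z}$ inside $M$ and inside $N$.

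Next I would produce the separable, weakly dense subalgebras on which to run the \Cs{} result. Since $M$ and $N$ are separably-acting, each has a countable $\sigma$-weakly dense subset; I would let $A\subset M$ be the sub-\Cs{} generated by such a subset of $M$ together with the unital copy of $\mathcal{Z}$, and define $B\subset N$ analogously. Then $A$ and $B$ are separable, unital, $\sigma$-weakly dense in $M$ and $N$ respectively, and each admits a unital embedding of $\mathcal{Z}$. By \autoref{prop:Gen2_both_Z_embed}, the tensor product $A\otimes_\mx B$ is singly generated, so there exist self-adjoint elements $s,t\in A\otimes_\mx B$ with $C^*(s,t)=A\otimes_\mx B$.

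Finally I would transport this to $M\bar{\otimes}N$. The commuting unital embeddings $a\mapsto a\otimes 1$ of $A$ and $b\mapsto 1\otimes b$ of $B$ into the \Cs{} $M\bar{\otimes}N$ induce, by the universal property of the maximal tensor product, a morphism $\pi\colon A\otimes_\mx B\to M\bar{\otimes}N$ with $\pi(a\otimes b)=a\otimes b$. Writing $\bar{s}:=\pi(s)$ and $\bar{t}:=\pi(t)$, the \Cs{} $C^*(\bar{s},\bar{t})=\pi(A\otimes_\mx B)$ contains $A\otimes 1$ and $1\otimes B$. Passing to $\sigma$-weak closures and using that $A$ is weakly dense in $M$ and $B$ weakly dense in $N$, the von Neumann algebra $W^*(\bar{s},\bar{t})$ contains $M\otimes 1$ and $1\otimes N$, hence equals $M\bar{\otimes}N$. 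Thus $M\bar{\otimes}N$ is generated by the two self-adjoint elements $\bar{s},\bar{t}$, equivalently by the single element $\bar{s}+i\bar{t}$, so it is singly generated.

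The only genuinely non-formal ingredient I anticipate is the identification $\pi_\tau(\mathcal{Z})''\cong\mathcal{R}$ (equivalently, the bare existence of a unital embedding $\mathcal{Z}\hookrightarrow\mathcal{R}$); once that is in place, the rest is a routine weak-density transfer built on \autoref{prop:Gen2_both_Z_embed}, and in particular requires no new computation beyond checking that the images of the \Cs{} generators generate the correct von Neumann algebra.
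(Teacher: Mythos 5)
Your proposal is correct and follows essentially the same route as the paper: both embed $\mathcal{Z}$ unitally into $\mathcal{R}\subset M$ and $\mathcal{R}\subset N$ via the tracial GNS representation, form separable weakly dense unital sub-\Cs{s} $A\subset M$ and $B\subset N$ containing the copies of $\mathcal{Z}$, apply \autoref{prop:Gen2_both_Z_embed} to $A\otimes_\mx B$, and transfer single generation to $M\bar{\otimes}N$ by noting that $C^*(A\otimes 1, 1\otimes B)$ is a quotient of $A\otimes_\mx B$ and is weakly dense. Your explicit appeal to the universal property of the maximal tensor product and the identification $\pi_\tau(\mathcal{Z})''\cong\mathcal{R}$ merely spells out steps the paper states without elaboration.
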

\begin{proof}
    Consider the GNS-representation $\pi\colon\mathcal{Z}\to B(H)$ of the Jiang-Su algebra with respect to its tracial state.
    The weak closure, $\pi(\mathcal{Z})''$, is isomorphic to the hyperfinite $\text{II}_1$-factor $\mathcal{R}$.
    Thus, there exists a weakly dense, unital copy of $\mathcal{Z}$ inside $\mathcal{R}$.

    Choose  weakly dense, separable, unital \Cs{s} $A_0\subset M$, and similarly $B_0\subset N$.
    Consider $\mathcal{Z}\subset\mathcal{R}\subset M$ and set $A:=C^*(A_0,\mathcal{Z})\subset M$.
    Similarly set $B:=C^*(B_0,\mathcal{Z})\subset N$.

    Then $A$ and $B$ are separable, unital \Cs{s} that both contain unital copies of the Jiang-Su algebra.
    By \autoref{prop:Gen2_both_Z_embed}, $A\otimes_\mx B$ is singly generated.

    Consider the sub-\Cs{} $C:=C^*(A\bar{\otimes}1,1\bar{\otimes}B)\subset M\bar{\otimes}N$.
    Then $C$ is a quotient of $A\otimes_\mx B$, and therefore singly generated.
    Since $C$ is weakly dense in $M\bar{\otimes}N$, we obtain that $M\bar{\otimes}N$ is singly generated, as desired.
\end{proof}

\begin{rmk}
    We note that a von Neumann algebra $M$ admits a unital embedding of $\mathcal{R}$ if and only if $M$ has no (non-zero) finite-dimensional representations.

    The analogous statement for \Cs{s} would be that a \Cs{} $A$ admits a unital embedding of $\mathcal{Z}$ if and only if $A$ has no (non-zero) finite-dimensional representations.
    It was shown by Elliott and R{\o}rdam, \cite{EllRor2006}, that this is true for \Cs{s} of real rank zero.
    However, in \cite{DadHirTomWin2009} a simple, separable, unital, non-elementary AH-algebra is constructed into which $\mathcal{Z}$ does not embed.
\end{rmk}

\noindent
    As a particular case of \autoref{prop:tensor_vNalg} we obtain the following result of Ge and Popa.

\begin{cor}[{Ge, Popa, \cite[Theorem 6.2]{GePop1998}}]
\label{prop:GePope}
    Assume $M,N$ are separably-acting $\text{II}_1$-factors.
    Then $M\bar{\otimes}N$ is singly generated.
\end{cor}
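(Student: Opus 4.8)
The plan is to obtain this as an immediate consequence of \autoref{prop:tensor_vNalg}, which already carries the full weight of the argument (reducing to the separable \Cs{} setting and invoking \autoref{prop:Gen2_both_Z_embed}). Thus the only thing left to check is that the hypothesis of that proposition is met, namely that each separably-acting $\text{II}_1$-factor admits a unital embedding of the hyperfinite $\text{II}_1$-factor $\mathcal{R}$.

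For this, I would recall the classical fact, going back to Murray and von Neumann, that every $\text{II}_1$-factor $M$ contains $\mathcal{R}$ unitally. The construction is the standard halving argument: using the normalized trace $\tau$ on $M$ together with the comparison theory of projections in a factor, one produces a projection $p\in M$ with $\tau(p)=1/2$. Since $M$ is a factor, $p$ and $1-p$ are Murray-von Neumann equivalent, and a partial isometry implementing this equivalence, together with $p$ and $1-p$, yields a unital copy of $M_2$ inside $M$. The relative commutant of this copy is again a separably-acting $\text{II}_1$-factor, so the procedure iterates, giving a unital, increasing chain of matrix subalgebras isomorphic to $M_{2^n}$. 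Passing to the weak closure of their union produces a unital copy of $\mathcal{R}$ in $M$, and likewise in $N$.

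With the unital embeddings $\mathcal{R}\hookrightarrow M$ and $\mathcal{R}\hookrightarrow N$ in hand, both hypotheses of \autoref{prop:tensor_vNalg} are satisfied, and I would conclude at once that $M\bar{\otimes}N$ is singly generated. I do not expect any genuine obstacle here: all of the substantive work has already been done in \autoref{prop:tensor_vNalg}, and the unital embedding of the hyperfinite factor into an arbitrary separably-acting $\text{II}_1$-factor is entirely classical, so the corollary is essentially a verification of hypotheses.
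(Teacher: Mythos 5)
Your proposal is correct and matches the paper's route exactly: the paper presents this corollary as an immediate special case of \autoref{prop:tensor_vNalg}, leaving tacit precisely the classical fact you spell out, namely that every $\text{II}_1$-factor contains a unital copy of the hyperfinite $\text{II}_1$-factor $\mathcal{R}$ via the iterated halving construction. Your verification of that hypothesis is accurate, so nothing further is needed.
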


\section{Applications}
\label{sect:appl}

\noindent
    In this section we show that the Jiang-Su algebra $\mathcal{Z}$ embeds unitally into the reduced group \Cs{s}, $C^*_r(\Gamma)$, of groups $\Gamma$ that contain a non-cyclic free subgroup, see \autoref{prop:Z-embedding_gps}.
    We only consider discrete groups, and we let $F_k$ denote the free group with $k$ generators ($k\in\{2,3\ldots,\infty\}$).

    We can apply \autoref{thm:Tensor_gen2} to show that certain tensor products of the form $A\otimes_\mx C^*_r(\Gamma)$ are singly generated, see \autoref{prop:Gen2_tensor_free_gp_alg}.
    In particular, $C^*_r(F_\infty)\otimes C^*_r(F_\infty)$ is singly generated, although it is not $\mathcal{Z}$-stable, see \autoref{pargr:free_gp_tensor_free_gp}.

\begin{pargr}
\label{pargr:Z-embedding_free_gp_alg}
    It was shown by Robert, \cite{Rob2010}, that the Jiang-Su algebra $\mathcal{Z}$ embeds unitally into $C^*_r(F_\infty)$.
    A key observation is that $C^*_r(F_\infty)$ has strict comparison of positive elements.
    This follows from the work of Dykema and R{\o}rdam on reduced free product \Cs{s}, see \cite{DykRor1998b} and \cite{DykRor2000}.

    Dykema and R{\o}rdam study the comparison of projections, but this can be generalized to obtain results about the comparison of positive elements, as noted by Robert, \cite{Rob2010}.
    In particular, \cite[Lemma 5.3]{DykRor1998b} and \cite[Theorem 2.1]{DykRor2000} can be generalized, and it follows that $C^*_r(F_\infty)$ has strict comparison of positive elements.
\end{pargr}

\begin{prop}
\label{prop:Z-embedding_gps}
       If $\Gamma$ is a discrete group that contains $F_\infty$ as a subgroup, then $\mathcal{Z}$ embeds unitally into $C^*_r(\Gamma)$.
\end{prop}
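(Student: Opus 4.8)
The plan is to reduce everything to the case $\Gamma=F_\infty$, which was already handled in \ref{pargr:Z-embedding_free_gp_alg} via Robert's unital embedding $\mathcal{Z}\hookrightarrow C^*_r(F_\infty)$. The only additional ingredient I need is that the subgroup inclusion $F_\infty\hookrightarrow\Gamma$ induces a \emph{unital, injective} $^*$-homomorphism $C^*_r(F_\infty)\hookrightarrow C^*_r(\Gamma)$; composing this embedding with Robert's then produces the desired unital embedding of $\mathcal{Z}$ into $C^*_r(\Gamma)$.

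First I would establish the general functoriality of reduced group $C^*$-algebras along subgroup inclusions. Given a subgroup $H\leq\Gamma$, decompose $\ell^2(\Gamma)$ as the orthogonal direct sum $\bigoplus_{Hg\in H\backslash\Gamma}\ell^2(Hg)$ over the right cosets of $H$. Since left translation by an element of $H$ permutes each right coset $Hg$ among itself, every summand $\ell^2(Hg)$ is invariant under the restriction $\lambda_\Gamma|_H$ of the left regular representation, and on each such summand $\lambda_\Gamma|_H$ is unitarily equivalent to the left regular representation $\lambda_H$ of $H$ on $\ell^2(H)$. Hence $\lambda_\Gamma|_H$ is unitarily equivalent to a (possibly infinite) multiple of $\lambda_H$. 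Because a multiple of a faithful representation of a $C^*$-algebra is again faithful, and morphisms of $C^*$-algebras are automatically isometric when injective, the assignment $\lambda_H(h)\mapsto\lambda_\Gamma(h)$ extends to an isometric $^*$-homomorphism $C^*_r(H)\to C^*_r(\Gamma)$. Since $\Gamma$ is discrete, both algebras are unital with units $\lambda_H(e)$ and $\lambda_\Gamma(e)$, and the map carries one unit to the other, so the embedding is unital.

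Applying this with $H=F_\infty$ and composing with the unital embedding $\mathcal{Z}\hookrightarrow C^*_r(F_\infty)$ from \ref{pargr:Z-embedding_free_gp_alg} yields a unital embedding $\mathcal{Z}\hookrightarrow C^*_r(\Gamma)$, as required. The main point to be careful about is the injectivity of the induced map at the \emph{reduced} level: for the full group $C^*$-algebra the inclusion map exists automatically by the universal property, but reduced group $C^*$-algebras are not functorial for arbitrary group homomorphisms. The coset-decomposition argument above is exactly what rescues the subgroup case, and I expect it to be the only step requiring genuine (though standard) justification; the rest is formal.
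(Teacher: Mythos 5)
Your proposal is correct and follows exactly the route of the paper's proof: invoke the standard fact that a subgroup inclusion $H\leq\Gamma$ induces a unital embedding $C^*_r(H)\subset C^*_r(\Gamma)$, then compose with Robert's unital embedding $\mathcal{Z}\hookrightarrow C^*_r(F_\infty)$. The only difference is that the paper states the subgroup fact without proof, whereas you supply the (correct) standard justification via the decomposition $\ell^2(\Gamma)\cong\bigoplus_{Hg}\ell^2(Hg)$, showing $\lambda_\Gamma|_H$ is a multiple of $\lambda_H$ and hence induces an isometric unital $^*$-homomorphism.
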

\begin{proof}
    In general, for any subgroup $\Gamma_1$ of a discrete group $\Gamma$, we have a unital embedding $C^*_r(\Gamma_1)\subset C^*_r(\Gamma)$.
    Hence, if $F_\infty$ is a subgroup of $\Gamma$, then $C^*_r(\Gamma)$ contains a unital copy of $C^*_r(F_\infty)$, which in turn contains a unital copy of $\mathcal{Z}$.
\end{proof}

\begin{rmk}
\label{pargr:gps_with_free_subgp}
    Every non-cyclic free group $F_k$ ($k\geq 2$) contains $F_\infty$ as a subgroup.
    In general, by the Nielsen-Schreier theorem, every subgroup of a free group is again free.
    Thus, if $a,b$ are free elements, then the the elements $a^kb^k$ generate a subgroup $\Gamma=\langle a^kb^k, k\geq 1\rangle$ that is free, and since none of the elements $a^kb^k$ is contained in the subgroup generated by the other elements, we have $\Gamma\cong F_\infty$.

    Thus, when we ask which discrete groups contain $F_\infty$ as a subgroup, we are equivalently asking which groups $\Gamma$ contain a non-cyclic free subgroup.
    It is a necessary condition that $\Gamma$ is non-amenable.
    The converse implication is known as the von Neumann conjecture, but this  was disproved in 1980 by Ol'shanskij.

    A counterexample are the so-called Tarski monster groups, in which every non-trivial proper subgroup is cyclic of some fixed prime order.
    Clearly, such a group cannot contain $F_\infty$ as a subgroup, and it is Ol'shanskij's contribution to show that Tarski monster groups exist and are non-amenable.

    On the other hand, every group with the weak Powers property, as defined in \cite{BocNic1988}, has a non-cyclic free subgroup.
    A proof can be found in \cite{dlH2007}, which also lists classes of groups that have the (weak) Powers property.
    We just mention that all free products $\Gamma_1\ast \Gamma_2$ with $|\Gamma_1|\geq 2, |\Gamma_2|\geq 3$ have the Powers property, and therefore \autoref{prop:Z-embedding_gps} applies.

    We may derive the following from \autoref{thm:Tensor_gen2} and \autoref{prop:Z-embedding_gps}:
\end{rmk}

\begin{cor}
\label{prop:Gen2_tensor_free_gp_alg}
    Let $A$ be a separable, unital \Cs{} that contains a countable sequence of pairwise orthogonal, full elements (e.g., $A$ is simple and nonelementary), and let $\Gamma$ be a group that contains a non-cyclic free subgroup.
    Then $A\otimes_\mx C^*_r(\Gamma)$ is singly generated.
\end{cor}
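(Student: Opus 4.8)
The plan is to apply \autoref{thm:Tensor_gen2} with $B:=C^*_r(\Gamma)$. Once its two hypotheses are verified, the conclusion is immediate, so this is really a bookkeeping argument: all of the genuine work has already been carried out in the generator construction behind \autoref{thm:Tensor_gen2} (ultimately \autoref{lma:Reducing_by_1}) and in the embedding result \autoref{prop:Z-embedding_gps}. I expect no serious obstacle here; the only points worth spelling out are the reduction from the stated group-theoretic hypothesis to an honest copy of $F_\infty$, and the passage from the given orthogonal full family in $A$ to a \emph{positive} one.

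First I would check condition $(2)$ of \autoref{thm:Tensor_gen2}, i.e.\ that $C^*_r(\Gamma)$ admits a unital embedding of $\mathcal{Z}$. By hypothesis $\Gamma$ contains a non-cyclic free subgroup. As recorded in \autoref{pargr:gps_with_free_subgp}, the Nielsen--Schreier argument (pass to the subgroup generated by $a^k b^k$, $k\geq 1$, for two free elements $a,b$) upgrades this to a copy of $F_\infty$ inside $\Gamma$. \autoref{prop:Z-embedding_gps} then supplies a unital embedding $\mathcal{Z}\hookrightarrow C^*_r(\Gamma)$, so condition $(2)$ holds. Note that $C^*_r(\Gamma)$ is unital, and that we may take $\Gamma$ countable so that $C^*_r(\Gamma)$ is separable; this is in any case forced, since a singly generated \Cs{} is separable.

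Next I would match the hypothesis on $A$ with condition $(1)$. We are given pairwise orthogonal, full elements $a_1,a_2,\ldots\in A$, whereas \autoref{thm:Tensor_gen2} asks these to be positive. I would therefore replace each $a_k$ by $b_k:=a_k^{*}a_k$. Each $b_k$ is positive; it generates the same closed two-sided ideal as $a_k$ and is hence full; and the orthogonality relation $a_i a_j^{*}=a_i^{*}a_j=0$ for $i\neq j$ gives $b_i b_j=a_i^{*}(a_i a_j^{*})a_j=0$, so the $b_k$ remain pairwise orthogonal. Thus $A$ satisfies condition $(1)$. In the simple, nonelementary case this is automatic, since in a simple \Cs{} every nonzero element is full, and a nonelementary \Cs{} admits an infinite sequence of pairwise orthogonal nonzero positive elements.

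With both hypotheses established, \autoref{thm:Tensor_gen2} yields that $A\otimes_\mx C^*_r(\Gamma)$ is singly generated, which is the assertion. As indicated above, the substance of the argument lives entirely in the cited results, and the only care required is in the two elementary reductions just described.
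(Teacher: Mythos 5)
Your proposal is correct and follows exactly the route the paper intends: the paper gives no separate proof of this corollary, deriving it directly from \autoref{thm:Tensor_gen2} together with \autoref{prop:Z-embedding_gps} and the $F_\infty$-reduction recorded in \autoref{pargr:gps_with_free_subgp}. Your two added verifications --- passing from orthogonal full elements to orthogonal full \emph{positive} elements via $b_k:=a_k^*a_k$ (using $a_ia_j^*=a_i^*a_j=0$ and fullness of $a_k^*a_k$), and noting that separability of $C^*_r(\Gamma)$ is implicitly required --- are sound and simply make explicit what the paper leaves to the reader.
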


\begin{exmpl}
\label{pargr:free_gp_tensor_free_gp}
    Let $\Gamma_1,\Gamma_2$ be two groups that contain non-cyclic free subgroups.
    Then $C^*_r(\Gamma_1\times \Gamma_2)\cong C^*_r(\Gamma_1)\otimes_\mx C^*_r(\Gamma_2)$ is singly generated.
    For example, for any $k,l\in\{2,3,\ldots,\infty\}$, the \Cs{} $C^*_r(F_k)\otimes_{\mx} C^*_r(F_l)$ is singly generated.
    In particular, $C^*_r(F_\infty) \otimes_{\mx} C^*_r(F_\infty)$ is singly generated.

    It was pointed out to the authors by S.\ Wassermann that $C^*_r(F_k)\otimes C^*_r(F_l)$ is not $\mathcal{Z}$-stable, for any $k,l\in\{2,3,\ldots,\infty\}$.
    In fact, if $C^*_r(F_k)\otimes C^*_r(F_l)\cong A\otimes B\otimes C$, then one of the three algebras $A,B$ or $C$ is isomorphic to $\CC$.
    This is a generalization of the fact that $C^*_r(F_k)$ is tensorially prime, and it can be proved similarly.

    We note that it is a difficult open problem whether $C^*_r(F_k)$ is singly generated itself.
\end{exmpl}

\begin{quest}
\label{quest:free_subgp_non-amen}
    Given a non-amenable (discrete) group $\Gamma$.
    Does $C^*_r(\Gamma)$ admit a unital embedding of $\mathcal{Z}$?
\end{quest}

\noindent
    For each group $\Gamma$, the trivial group-morphism $\Gamma\to\{1\}$ induces a surjective morphism $C^*(\Gamma)\to\CC$.
    Thus, the Jiang-Su algebra can never unitally embed into a full group \Cs{}.
    If $\Gamma$ is amenable, then $C^*_r(\Gamma)\cong C^*(\Gamma)$, and consequently there is no unital embedding of $\mathcal{Z}$ into the reduced group \Cs{} of an amenable group.

    On the other hand, if $\Gamma$ contains a non-cyclic free subgroup, then \autoref{prop:Z-embedding_gps} gives a positive answer to \autoref{quest:free_subgp_non-amen}.
    As noted in \autoref{pargr:gps_with_free_subgp}, not every non-amenable group contains a non-cyclic free subgroup.
    However, it is known that the reduced group \Cs{} of a non-amenable group has no finite-dimensional representations, which is a necessary condition for the Jiang-Su algebra to embed.

\section*{Acknowledgments}

\noindent
The first named author thanks Mikael R{\o}rdam for valuable comments, especially on the applications in \autoref{sect:appl}.\\


\providecommand{\bysame}{\leavevmode\hbox to3em{\hrulefill}\thinspace}
\providecommand{\MR}{\relax\ifhmode\unskip\space\fi MR }
\providecommand{\MRhref}[2]{%
  \href{http://www.ams.org/mathscinet-getitem?mr=#1}{#2}
}
\providecommand{\href}[2]{#2}

\end{document}